\documentclass[11pt]{article}

\usepackage[utf8]{inputenc}
\usepackage{microtype}
\usepackage{geometry}
\usepackage{amsmath,amssymb,amsthm,mathtools}
\usepackage{enumitem}
\usepackage{booktabs,tabularx}
\usepackage{xcolor}
\usepackage[hidelinks]{hyperref}
\usepackage[nameinlink,noabbrev]{cleveref}

\crefname{theorem}{Theorem}{Theorems}
\Crefname{theorem}{Theorem}{Theorems}
\crefname{proposition}{Proposition}{Propositions}
\Crefname{proposition}{Proposition}{Propositions}
\crefname{lemma}{Lemma}{Lemmas}
\Crefname{lemma}{Lemma}{Lemmas}
\crefname{corollary}{Corollary}{Corollaries}
\Crefname{corollary}{Corollary}{Corollaries}
\crefname{remark}{Remark}{Remarks}
\Crefname{remark}{Remark}{Remarks}
\crefname{section}{Section}{Sections}
\Crefname{section}{Section}{Sections}
\crefname{subsection}{Subsection}{Subsections}
\Crefname{subsection}{Subsection}{Subsections}
\crefname{equation}{equation}{equations}
\Crefname{equation}{equation}{equations}

\numberwithin{equation}{section}
\setlist{itemsep=0.2em,topsep=0.4em}

\newtheorem{theorem}{Theorem}[section]
\newtheorem{proposition}[theorem]{Proposition}
\newtheorem{lemma}[theorem]{Lemma}
\newtheorem{corollary}[theorem]{Corollary}
\theoremstyle{remark}
\newtheorem{remark}[theorem]{Remark}

\newcommand{\R}{\mathbb R}

\DeclareMathOperator{\curl}{curl}
\DeclareMathOperator{\diver}{div}
\DeclareMathOperator{\supp}{supp}

\begin{document}

\providecommand{\PaperAuthorNames}{\textbf{Wangzhe Wu}}
\providecommand{\PaperAuthorAffiliations}{\textbf{School of Mathematics and Statistics, Ningbo University}\\
\textbf{Ningbo 315211, Zhejiang, China}}
\providecommand{\PaperCorrespondingEmail}{\textbf{Email: wuwz18@mail.ustc.edu.cn}}

\title{The Global Weak-Lorentz Vorticity Endpoint in the Stationary
Navier--Stokes Liouville Problem}
\author{\PaperAuthorNames\\
\PaperAuthorAffiliations\\
\PaperCorrespondingEmail}
\date{}

\maketitle

\begin{abstract}
Let $(v,p)$ be a smooth stationary Navier--Stokes flow in $\mathbb R^3$
that vanishes at infinity, and set $\omega :=\operatorname{curl}v$.  We prove
the endpoint implication
\[
 \omega\in L^{9/5,\infty}(\mathbb R^3)
 \quad\Longrightarrow\quad v\equiv0.
\]
This weak-Lorentz condition is invariant under the far-field rescaling associated with the borderline vorticity decay $|\omega(x)|=O(|x|^{-5/3})$ and strictly extends the $L^{9/5}$ vorticity criterion of
Chae--Wolf.  It also removes the relative smallness condition from the
critical pointwise criterion of Kozono--Terasawa--Wakasugi: the decay
$|\omega(x)|=O(|x|^{-5/3})$ alone implies $v\equiv0$.  No finite Dirichlet
energy is assumed. In our proof, we firstly use the endpoint Biot--Savart mapping and the critical
annular Lorentz estimate of Seregin--Wang to yield finite Dirichlet
energy.  A logarithmic bound for the cumulative $L^{9/5}$ mass then selects
blow-down scales whose stationary Euler limit satisfies Bernoulli companion
laws on the exterior region.  Together with the inherited weak endpoint
bounds, these laws force
the limiting energy flux to vanish.  A harmonic-cutoff identity transfers
this vanishing to the original scale and yields zero Dirichlet energy.
\end{abstract}

\medskip
\noindent\textbf{Keywords.}
Stationary Navier--Stokes equations; Liouville theorem; weak Lorentz spaces;
critical vorticity; Euler blow-down; Bernoulli flux.

\smallskip
\noindent\textbf{2020 Mathematics Subject Classification.}
35Q30, 35B53, 76D05.

\section{Introduction}
\label{sec:introduction}

We consider smooth solutions of
\begin{equation}
 -\Delta v+(v\cdot\nabla)v+\nabla p=0,
 \qquad \diver v=0
 \quad\text{in }\mathbb R^3,
 \label{eq:NS}
\end{equation}
such that $v(x)\to0$ as $|x|\to\infty$.  It remains open whether these
conditions together with
\[
 \int_{\mathbb R^3}|\nabla v|^2\,dx<\infty
\]
force $v$ to vanish; see \cite{Galdi2011,Leray1933}.  We prove a Liouville
theorem at the global weak-Lorentz endpoint for the vorticity
$\omega:=\curl v$.

\subsection{Main result and critical scale}

The exponent $9/5$ is tied to the critical pointwise scale
$|\omega(x)|\simeq |x|^{-5/3}$ studied by
Kozono--Terasawa--Wakasugi \cite{KTW2017}.  For $R>1$, the associated
far-field blow-down is
\begin{equation}
 V_R(y)=R^{2/3}v(Ry),
 \qquad
 \Omega_R(y)=\curl V_R(y)
            =R^{5/3}\omega(Ry).
 \label{eq:intro-critical-blowdown}
\end{equation}
The identity $5/3-3/p=0$ singles out $p=9/5$.  For every Lorentz index
$1\leq \ell\leq\infty$ and every
measurable set $E\subset\mathbb R^3$,
\begin{equation}
 \|\Omega_R\|_{L^{9/5,\ell}(E)}
 =\|\omega\|_{L^{9/5,\ell}(RE)},
 \label{eq:intro-lorentz-invariance}
\end{equation}
where $RE:=\{Ry:y\in E\}$.
Thus the Lorentz scale $L^{9/5,\ell}$ is invariant under this blow-down, and
$L^{9/5,\infty}$ is its weak endpoint.  We use the word \emph{critical}
in this scaling sense.  Moreover, the inclusion
\[
 L^{9/5}(\mathbb R^3)
 \subsetneq L^{9/5,\infty}(\mathbb R^3)
\]
is strict: the model tail
$|x|^{-5/3}\mathbf1_{\{|x|>1\}}$ belongs to the weak space but not to the
strong one.  Our main result reaches this full weak endpoint.

\begin{theorem}[Global weak-Lorentz vorticity endpoint]
\label{thm:main}
Let $(v,p)$ be a smooth solution of \eqref{eq:NS} in $\mathbb R^3$ such
that $v(x)\to0$ as $|x|\to\infty$, and let
$\omega=\curl v$.  If
\begin{equation}
 \omega\in L^{9/5,\infty}(\mathbb R^3),
 \label{eq:main-weak-hypothesis}
\end{equation}
then $v\equiv0$.
\end{theorem}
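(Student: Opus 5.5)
The proof follows the roadmap in the abstract and has four stages: finite Dirichlet energy, a choice of good blow-down scales, a stationary Euler limit whose energy flux vanishes, and a transfer of this vanishing back to the original scale.

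\textbf{Step 1: finite Dirichlet energy.}
Since $v\to0$ at infinity and $\diver v=0$, the Biot--Savart law gives $v=\curl(-\Delta)^{-1}\omega$. The operator $\nabla(-\Delta)^{-1}\curl$ is bounded on Lorentz spaces, and the Riesz potential of order one maps $L^{9/5,\infty}$ to $L^{9/2,\infty}$ (Hardy--Littlewood--Sobolev in Lorentz form, since $\tfrac{5}{9}-\tfrac13=\tfrac29$). Hence
\[
 \nabla v\in L^{9/5,\infty},\qquad v\in L^{9/2,\infty}.
\]
The pair $(\tfrac92,\tfrac95)$ is exactly the scaling-critical pair in the annular Lorentz estimate of Seregin--Wang. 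I would apply the local energy inequality on annuli $B_{2R}\setminus B_R$ with the pressure $p=\mathcal R_i\mathcal R_j(v_iv_j)\in L^{9/4,\infty}$. The cutoff terms $R^{-2}\int|v|^2$ and $R^{-1}\int|v|(|v|^2+|p|)$ over the annuli are bounded by weak-norm Hölder inequalities. Their sum over dyadic $R$ gives a Caccioppoli-type inequality, and a hole-filling argument then yields $\int_{\R^3}|\nabla v|^2<\infty$. With finite energy in hand, Galdi's classical decay gives $p\to0$ and the Bernoulli structure at infinity.

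\textbf{Step 2: good blow-down scales.}
Consider the truncations $\omega\mathbf 1_{\{|x|<R\}}$. Splitting the distribution function at the level $R^{-5/3}$ shows
\[
 \int_{B_R}|\omega|^{9/5}\lesssim \log R .
\]
By pigeonholing over dyadic shells, I would choose a sequence $R_k\to\infty$ along which the annular mass is controlled in the averaged sense needed later. The rescalings $V_{R_k},\Omega_{R_k}$ from \eqref{eq:intro-critical-blowdown} are bounded uniformly in $L^{9/2,\infty}$ and $L^{9/5,\infty}$ by \eqref{eq:intro-lorentz-invariance}.

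\textbf{Step 3: the Euler limit and the main obstacle.}
In the blow-down variables the viscosity becomes $R^{-1/3}\to0$. Finite Dirichlet energy gives strong $L^2_{\mathrm{loc}}$ compactness of $V_{R}$ away from the origin. The limit $V$ is therefore a stationary Euler solution on $\R^3\setminus\{0\}$ with $V\in L^{9/2,\infty}$ and $\curl V\in L^{9/5,\infty}$. The Bernoulli head $H=\tfrac12|V|^2+P$ then satisfies $V\cdot\nabla H=0$ and $\nabla H=V\times\curl V$.

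Using the weak endpoint bounds, I would show that the flux
\[
 \Phi(r)=\int_{|y|=r}H\,V\cdot n
\]
is independent of $r$. I would then show $\Phi\to0$ as $r\to\infty$ via the decay of $H$ on average. This step is the main obstacle: the weak-type bounds are borderline, so the crude Hölder estimate only gives $O(1)$. I expect to need the logarithmic scale selection of Step 2 together with the coarea formula on level sets of $H$.

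\textbf{Step 4: transfer to the original scale.}
Test the Navier--Stokes system against $v\varphi$, where $\varphi$ is a harmonic-type cutoff adapted to the scale $R_k$. The resulting energy identity expresses $\int|\nabla v|^2\varphi$ as viscous boundary terms, which vanish by finite energy, plus the flux of $(\tfrac12|v|^2+p)v$, which converges to the limit Euler flux of Step 3. Since that limit flux is zero, $\int_{\R^3}|\nabla v|^2=0$. Hence $v$ is constant, and $v\to0$ at infinity forces $v\equiv0$.
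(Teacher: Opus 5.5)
There is a genuine gap, and it is exactly the step you flag as the main obstacle: nothing in Step~3 makes the constant flux $\Phi$ vanish. Under the scale-invariant bounds $V\in L^{9/2,\infty}$ and $H\in L^{9/4,\infty}$, radial averaging gives only $|\Phi|\le R^{-1}\int_{A_R}|HV|\,dy\le C$. Neither the logarithmic mass bound nor coarea on level sets of $H$ improves this by itself. The missing idea (the paper writes $Q$ for your $H$) is the family of renormalized Bernoulli laws $\diver(\beta(H)V)=0$ on $\{|y|>1\}$, for every $\beta\in C^1$ with $\beta'$ bounded. These are applied with $\Psi_\tau(z)=z(1-e^{-z^2/\tau})$, which is cubic near $0$ and linear at infinity, and each renormalized flux is again constant in $r>1$. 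On $A_R\cap\{|H|\le R^{-1}\}$ one has $|\Psi_\tau(H)V|\le\tau^{-1}R^{-2}|HV|$. The weak-$L^{9/4}$ bound gives $|A_R\cap\{|H|>R^{-1}\}|\le CR^{9/4}$. Hence the renormalized flux is $O(\tau^{-1}R^{-2}+R^{-1/4})$ and therefore zero. Letting $\tau\downarrow0$ on a fixed sphere, by dominated convergence, gives $\Phi=0$.

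The scale selection is needed to \emph{justify} these laws, not to estimate $\Phi$. The product rule for $\beta(H)V$ pairs $H$ with $\nabla V$, and $V$ with $\nabla H=V\times\curl V$, at exactly dual exponents ($\tfrac49+\tfrac59=\tfrac29+\tfrac79=1$). So weak-Lorentz bounds put these products only in $L^{1,\infty}$. What is required is a uniform strong $L^{9/5}$ bound on $\curl V_{R_k}$ on \emph{every} fixed rescaled exterior annulus. The paper gets this from its one-sided block lemma, $a_{n_j+m}\le B_m$ for $0\le m\le j$. This upgrades the limit to $V\in W^{1,9/5}_{\mathrm{loc}}$, $H\in L^{9/4}_{\mathrm{loc}}$ and $\nabla H\in L^{9/7}_{\mathrm{loc}}$ on $\{|y|>1\}$. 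An unspecified averaged pigeonhole does not provide this.

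Several supporting claims also need repair:
\begin{itemize}
\item \emph{Compactness.} Finite Dirichlet energy does not give $L^2_{\mathrm{loc}}$ compactness, since $\|\nabla V_R\|_{L^2(K)}^2=R^{1/3}\|\nabla v\|_{L^2(RK)}^2$ need not stay bounded. Compactness must come from the scale-invariant bound on $\nabla V_R$ in $L^{9/5,\infty}_{\mathrm{loc}}$, via local Hodge and Rellich estimates.
\item \emph{Passing to the limit in the current.} To pass to the limit in the cubic current $HV$ you need three things: strong convergence in $L^{q_0}_{\mathrm{loc}}$ with $q_0>3$, a pressure normalized once before rescaling, and local pressure compactness. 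Both Steps~3 and~4 depend on this.
\item \emph{The ordinary law.} $\diver(HV)=0$ should come from the rescaled local-energy identity, using $\nu_R\int_K|\nabla V_R|^2\,dy=\int_{RK}|\nabla v|^2\,dx\to0$. At this borderline regularity it is not available from the weak bounds on the Euler limit alone.
\item \emph{The logarithmic bound.} Your $\log R$ bound needs a second cut at high amplitudes, for instance via $\|\omega\|_{L^2}^2=D$. A single split at $R^{-5/3}$ leaves the divergent integral $\int^\infty\lambda^{-1}\,d\lambda$.
\end{itemize}
Steps~1 and~4 are sound in outline. With an exact harmonic cutoff the sphere term $\frac1{2R}\int_{S_R}|v|^2\,dS$ need not vanish, but it has a favorable sign.
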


No smallness condition is imposed on the weak Lorentz norm.  Finite
Dirichlet energy is also not assumed; \Cref{lem:automaticD} derives it
from \eqref{eq:main-weak-hypothesis} by combining the endpoint
Biot--Savart estimate with \cite[Theorem~1.1(i)]{SereginWang2020}.
The direct consequence of \Cref{thm:main} at the
Kozono--Terasawa--Wakasugi scale \cite{KTW2017} is worth stating
separately.

\begin{corollary}[Critical pointwise criterion without smallness]
\label{cor:pointwise-critical}
Let $(v,p)$ be a smooth solution of \eqref{eq:NS} in $\mathbb R^3$ such
that $v(x)\to0$ as $|x|\to\infty$.  If
\[
 E_\omega
 :=\limsup_{|x|\to\infty}|x|^{5/3}
 |\curl v(x)|<\infty,
\]
then $v\equiv0$.
\end{corollary}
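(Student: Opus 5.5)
The corollary should follow directly from \Cref{thm:main}, once we show that the pointwise hypothesis $E_\omega<\infty$ forces $\omega\in L^{9/5,\infty}(\mathbb R^3)$. So the whole task is to verify membership in the weak-Lorentz space. We split $\omega$ into a near part and a far part and bound the distribution function of each.

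\emph{Step 1: splitting.} Since $E_\omega<\infty$, there is $R_0\ge1$ with
\[
|\omega(x)|\le (E_\omega+1)|x|^{-5/3}\quad\text{for }|x|\ge R_0 .
\]
Write $\omega=\omega\mathbf 1_{B_{R_0}}+\omega\mathbf 1_{\{|x|\ge R_0\}}$. Because $v$ is smooth, $\omega$ is continuous, so the first piece is bounded and compactly supported. It therefore lies in $L^{9/5}(\mathbb R^3)\subset L^{9/5,\infty}(\mathbb R^3)$.

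\emph{Step 2: the tail.} The second piece is dominated pointwise by $C|x|^{-5/3}$ with $C=E_\omega+1$. For $\lambda>0$,
\[
\bigl|\{|x|\ge R_0: C|x|^{-5/3}>\lambda\}\bigr|\le \bigl|\{|x|<(C/\lambda)^{3/5}\}\bigr| = \tfrac{4\pi}{3}(C/\lambda)^{9/5}.
\]
Hence $\lambda^{9/5}\,|\{|\omega\mathbf 1_{\{|x|\ge R_0\}}|>\lambda\}|$ is bounded uniformly in $\lambda$. This is precisely the statement that the tail lies in $L^{9/5,\infty}$, matching the model tail discussed before \Cref{thm:main}.

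\emph{Step 3: conclusion.} The space $L^{9/5,\infty}$ is a quasi-normed (indeed normable) linear space, so the sum of the two pieces belongs to it; concretely, $|\{|f+g|>\lambda\}|\le|\{|f|>\lambda/2\}|+|\{|g|>\lambda/2\}|$. Thus $\omega\in L^{9/5,\infty}(\mathbb R^3)$. Together with smoothness and $v\to0$ at infinity, this is exactly the hypothesis \eqref{eq:main-weak-hypothesis} of \Cref{thm:main}, which gives $v\equiv0$.

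There is no real obstacle here; the only point needing care is that $\limsup$ finiteness gives the pointwise bound only outside some ball, and this is handled by the near/far splitting.
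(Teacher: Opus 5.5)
Your proposal is correct and is essentially the paper's argument. The paper gets the bound $|\omega(x)|\le C|x|^{-5/3}$ outside a large ball and uses smoothness on the remaining bounded set to place $\omega$ in $L^{9/5,\infty}(\mathbb R^3)$. Then \Cref{thm:main} applies; your version just writes out the distribution-function estimates explicitly.
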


Indeed, $E_\omega<\infty$ gives
$|\omega(x)|\leq C|x|^{-5/3}$ outside a sufficiently large ball.
Together with smoothness on the remaining bounded set, this places
$\omega$ in $L^{9/5,\infty}(\mathbb R^3)$.  No smallness is required.

\subsection{Why the weak endpoint is difficult}

After the whole-space harmonic ambiguity is removed by the condition
$v(x)\to0$, the Biot--Savart operator reconstructs $v$ from $\omega$ and
has order $-1$.  The Lorentz Hardy--Littlewood--Sobolev estimate therefore
gives
\begin{equation}
 L^{9/5,\infty}
 \xrightarrow{\ \curl(-\Delta)^{-1}\ }
 L^{9/2,\infty},
 \qquad
 \|v\|_{L^{9/2,\infty}}\leq
 C\|\omega\|_{L^{9/5,\infty}}.
 \label{eq:intro-endpoint-HLS}
\end{equation}
The exponent $9/2$ is the borderline velocity exponent in the stationary
Liouville cutoff estimate.  At this endpoint, however, weak control does
not give strong integrability, and the weak norm is not absolutely
continuous under restriction to sets of small measure.  Consequently, the
$L^{9/5}$ argument cannot simply be repeated with weak norms.  The proof
must recover enough strong endpoint information from the global
distribution of $\omega$.  This is also why a global weak norm is stronger
than a collection of separate annular weak bounds: it couples the
distribution of the vorticity across all scales.

\subsection{Comparison with the closest vorticity criteria}

For the comparison below, set
\[
 D(v):=\int_{\mathbb R^3}|\nabla v|^2\,dx.
\]
Within the common class of smooth solutions vanishing at infinity, both
earlier hypotheses in the table imply our weak-Lorentz hypothesis.  The
table records exactly what is enlarged or removed.

\begin{center}
\small
\renewcommand{\arraystretch}{1.35}
\begin{tabularx}{\textwidth}{@{}
  >{\raggedright\arraybackslash}p{0.31\textwidth}
  >{\raggedright\arraybackslash}p{0.27\textwidth}
  >{\raggedright\arraybackslash}X@{}}
\toprule
Earlier result & Present theorem & Exact improvement \\
\midrule
\textbf{Kozono--Terasawa--Wakasugi (2017)}
\cite[Theorem~1.1 and Corollary~1.3]{KTW2017}

\emph{Theorem~1.1 (energy):}

\(\displaystyle
 E_\omega<\infty
 \Longrightarrow D(v)\leq C_0E_\omega^3
\)

\emph{Corollary~1.3 (Liouville):}

The conclusion $v\equiv0$ additionally requires
\[
 E_\omega^3\leq\delta D(v).
\]
\vspace{-0.8em}
&
\(\displaystyle
 E_\omega<\infty
 \Longrightarrow v\equiv0
\)
&
The additional condition
\[
 E_\omega^3\leq\delta D(v)
\]
is removed entirely.  Thus every finite $E_\omega$ is allowed; no relative
smallness of the $O(|x|^{-5/3})$ tail is required. \\
\addlinespace[2em]
\textbf{Chae--Wolf (2016)}
\cite[Remark~1.2]{ChaeWolf2016}

\emph{Remark~1.2:}

\(\displaystyle
  \omega\in L^{9/5}(\mathbb R^3)
  \Longrightarrow v\equiv0
\)
&
\(\displaystyle
  \omega\in L^{9/5,\infty}(\mathbb R^3)
  \Longrightarrow v\equiv0
\)
&
The earlier hypothesis is contained in ours, and
\[
 L^{9/5}\subsetneq L^{9/5,\infty}.
\]
Thus the admissible vorticity space is strictly enlarged. \\
\bottomrule
\end{tabularx}
\end{center}

Theorem~1.1 of Kozono--Terasawa--Wakasugi gives an a priori energy
estimate, whereas the vorticity branch of their Corollary~1.3 obtains
$v\equiv0$ only under the additional condition
$E_\omega^3\leq\delta D(v)$.  Our result removes exactly this condition.
It does not, however, include the separate weak-$L^{9/2}$ velocity branch
of the same corollary.

The two base vorticity conditions
$\omega\in L^{9/5}(\mathbb R^3)$ and $E_\omega<\infty$ are incomparable.
The critical tail
$|x|^{-5/3}\mathbf1_{\{|x|>1\}}$ has finite $E_\omega$ but does not belong
to $L^{9/5}$, while strong $L^{9/5}$ integrability gives no uniform
pointwise critical envelope.  Both conditions nevertheless imply
$\omega\in L^{9/5,\infty}(\mathbb R^3)$.  Thus \Cref{thm:main} enlarges
the hypothesis space in \cite[Remark~1.2]{ChaeWolf2016} and removes the
relative-smallness assumption from the vorticity branch of
\cite[Corollary~1.3]{KTW2017}.  Here strictness refers only to the
hypothesis spaces; it does not presuppose a nonzero stationary solution
separating the criteria.

\subsection{Relation to other work}

The endpoint velocity estimate of Seregin and Wang
\cite[Theorem~1.1(i)]{SereginWang2020} provides the finite-energy input
used here.  With $q=9/2$, $\ell=\infty$, and $\gamma=2/3$, it gives
\[
 D(v)\leq c\liminf_{R\to\infty}
 \|v\|_{L^{9/2,\infty}(B_R\setminus B_{R/2})}^3.
\]
We use this estimate directly in \Cref{lem:automaticD}; it is not part of
the new argument.  The Liouville conclusion in
\cite[Theorem~1.1(i)]{SereginWang2020} additionally requires
\[
 \liminf_{R\to\infty}
 \|v\|_{L^{9/2,\infty}(B_R\setminus B_{R/2})}^3
 \leq \delta D(v)
\]
for a sufficiently small $\delta$.  Our result removes this condition
under the global weak-vorticity hypothesis, but it does not contain the
general annular velocity criterion of Seregin--Wang.

The table includes only direct implication comparisons.  Other
critical velocity-space, Lorentz--Morrey, annular, potential, and
low-frequency criteria include
\cite{Seregin2016,ChaeWolf2019,Jarrin2020,Tsai2021,
ChoNeustupaYang2024,CoiculescuYang2026,Lerner2026}.

The potential criteria of Coiculescu and Yang
\cite[Theorems~1.1 and~1.2]{CoiculescuYang2026} concern $D$-solutions and
impose growth assumptions on antiderivatives of the velocity.  They are
not directly comparable with the global weak-vorticity condition used
here.  The Chae--Wolf result developed in their work is the potential
criterion of \cite{ChaeWolf2019}, rather than the global vorticity
criterion of \cite{ChaeWolf2016} considered above.

Recent related work includes Tan \cite{Tan2025}, who derives a
frequency-localized formula for the Dirichlet integral and corresponding
Liouville criteria for the classical and fractional equations, and Cho and
Yang \cite{ChoYang2026}, who establish refined annular $L^p$ velocity-growth
criteria for $3/2<p<3$.

We are not aware of an earlier Liouville theorem for smooth stationary
flows vanishing at infinity under the sole additional assumption
\[
 \omega\in L^{9/5,\infty}(\mathbb R^3)
\]
without a smallness restriction on this norm.

\subsection{Outline of the proof}

The proof is organized into four steps.  The first two isolate the
functional-analytic content of the weak Lorentz endpoint; the last two
convert it into rigidity.

\emph{Endpoint mapping and recovery of finite energy.}
The weak Hardy--Littlewood--Sobolev estimate gives
$v\in L^{9/2,\infty}$.  The annular estimate of Seregin and Wang then
supplies $\nabla v\in L^2(\mathbb R^3)$, which is the finite-energy input
used in the remaining argument.

\emph{Logarithmic control and scale selection.}
Weak $L^{9/5}$ does not imply strong $L^{9/5}$.  Nevertheless, its global
distribution bound, combined with the recovered $L^2$ control of the
vorticity, gives
\[
 \int_{1<|x|<R}|\omega|^{9/5}\,dx\leq C(1+\log R).
\]
The resulting one-sided selection argument supplies radii $R_j\to\infty$
for which every fixed exterior annulus of the rescaled fields has a
uniform strong $L^{9/5}$ vorticity bound.

\emph{Euler blow-down and endpoint companion laws.}
A fixed whole-space pressure normalization, together with localized
Hodge and Riesz-transform compactness, produces a well-defined stationary
Euler limit $(V,P)$ along the selected radii.  Writing
$Q=P+|V|^2/2$, the rescaled local-energy identity first gives
$\diver(QV)=0$ on the entire punctured space.  On the exterior
region, scale selection yields the two dual endpoint pairs
\[
 (V,\nabla Q)\in L^{9/2}_{\mathrm{loc}}\times L^{9/7}_{\mathrm{loc}},
 \qquad
 (\nabla V,Q)\in L^{9/5}_{\mathrm{loc}}\times L^{9/4}_{\mathrm{loc}}.
\]
They justify the Sobolev product and chain rules and hence
$\diver(\beta(Q)V)=0$ for every $\beta\in C^1$ with bounded
derivative.

\emph{Zero flux and return to the physical scale.}
A truncation that is cubic near $Q=0$ and linear for large $Q$, combined
with the inherited weak Lorentz bounds, forces the constant tangent
Bernoulli flux to vanish.  Scale-invariant shell estimates also give a
uniform Abel-type weighted tail bound for the rescaled energy currents.
This permits passage through the noncompact harmonic-cutoff identity and
shows that the full Dirichlet integral of the original flow is zero.

We use standard Lorentz-space and singular-integral estimates from
\cite{ONeil1963,BennettSharpley1988,Stein1970}, and standard Sobolev
and chain-rule facts as in \cite{BahouriCheminDanchin2011}.

\section{Endpoint velocity control and Dirichlet finiteness}
\label{sec:automatic-energy}

This section gives the finite-energy reduction used in the main proof.
The global weak-$L^{9/5}$ vorticity bound yields a normalized Biot--Savart
representation and global weak-$L^{9/2}$ control of the velocity.  The
endpoint estimate of Seregin and Wang then gives finite Dirichlet energy.
The later scale-selection argument uses the global distribution of the
vorticity across different annuli.

\begin{lemma}[Endpoint Biot--Savart representation and finite energy]
\label{lem:automaticD}
Let $(v,p)$ be a smooth solution of \eqref{eq:NS} such that
$v(x)\to0$ as $|x|\to\infty$, and let $\omega=\curl v$.  If
\begin{equation}
 \omega\in L^{9/5,\infty}(\R^3),
 \label{eq:weakendpointautomaticD}
\end{equation}
then
\begin{equation}
 v(x)=\frac1{4\pi}\int_{\R^3}
 \frac{\omega(y)\times(x-y)}{|x-y|^3}\,dy,
 \label{eq:weakBiotSavart}
\end{equation}
where the integral is absolutely convergent for every $x\in\R^3$.
Moreover,
\begin{equation}
 \|v\|_{L^{9/2,\infty}(\R^3)}
 \leq C\|\omega\|_{L^{9/5,\infty}(\R^3)},
 \label{eq:endpoint-velocity}
\end{equation}
and
\begin{equation}
 D(v):=\int_{\R^3}|\nabla v|^2\,dx
 \leq C\|\omega\|_{L^{9/5,\infty}(\R^3)}^3.
 \label{eq:automaticDbound}
\end{equation}
\end{lemma}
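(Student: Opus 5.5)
The plan has three steps: show the Biot--Savart integral converges absolutely, identify it with $v$ by a Liouville argument, and then read off the two estimates.

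\emph{Absolute convergence.} Split the kernel $|x-y|^{-2}$ as $k_1+k_2$, where $k_1=|x-y|^{-2}\mathbf 1_{\{|x-y|<1\}}$ and $k_2=|x-y|^{-2}\mathbf 1_{\{|x-y|\ge1\}}$. The near part is harmless because $\omega$ is smooth, hence bounded on $B_1(x)$, and $k_1$ is integrable. For the far part, note that $k_2\in L^{3/2,\infty}$ and $k_2\in L^{q}$ for every $q>3/2$. The dual exponent of $9/5$ is $9/4>3/2$, so $k_2$ lies in the Lorentz space $L^{9/4,1}$, which is the dual of $L^{9/5,\infty}$. The Lorentz H\"older inequality of \cite{ONeil1963} then gives $\int|\omega|\,k_2\,dy<\infty$. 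Call the resulting field $u$.

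\emph{Identification $u=v$.} I would first show that $u$ is continuous and $u(x)\to0$ as $|x|\to\infty$. For the near part, use boundedness of $\omega$ on compact sets together with the decay supplied by the weak norm. For the far part, use the splitting $\omega=\omega\mathbf 1_{\{|\omega|>\lambda\}}+\omega\mathbf 1_{\{|\omega|\le\lambda\}}$, or simply dominated-convergence arguments on the tail. Since $\diver\omega=0$ in the distributional sense, one has $\curl u=\omega$ and $\diver u=0$. Hence $w:=v-u$ is harmonic. It is bounded, since $v\to0$ and $u\to0$, and it tends to zero at infinity. By Liouville's theorem $w\equiv0$, which gives \eqref{eq:weakBiotSavart}.

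The decay of $u$ is the subtle step, because the weak norm is not absolutely continuous. My fallback is to avoid pointwise decay altogether. The estimate below shows $u\in L^{9/2,\infty}$, and $v\to0$, so $w$ is a harmonic function lying in $L^\infty+L^{9/2,\infty}$. The mean value property over large balls then shows that $w$ is constant, and the constant is $0$ because $\{|u|>\epsilon\}$ has finite measure while $v\to0$. This route needs only the distributional identity $\curl u=\omega$, $\diver u=0$, which follows from the convolution structure $u=\nabla\Gamma*\omega$ together with integration by parts against test functions.

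\emph{Estimates.} The bound \eqref{eq:endpoint-velocity} is the Lorentz Hardy--Littlewood--Sobolev (O'Neil) inequality for convolution with $|x|^{-2}\in L^{3/2,\infty}$. It maps $L^{9/5,\infty}$ into $L^{r,\infty}$ with $1+1/r=5/9+2/3$, that is, $r=9/2$. For \eqref{eq:automaticDbound}, apply \cite[Theorem~1.1(i)]{SereginWang2020} with $q=9/2$, $\ell=\infty$, $\gamma=2/3$. This gives
\[
D(v)\le c\liminf_{R\to\infty}\|v\|_{L^{9/2,\infty}(B_R\setminus B_{R/2})}^3\le c\|v\|_{L^{9/2,\infty}(\R^3)}^3\le C\|\omega\|_{L^{9/5,\infty}}^3.
\]
The hypotheses of that theorem have to be checked: smoothness, $v\to0$, and possibly a requirement such as $v\in L^{9/2,\infty}$ or finiteness of the relevant liminf, all of which are now available. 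If that result presupposes $D(v)<\infty$, I would first obtain local finiteness and then run its cutoff argument with annular cutoffs directly, so that finiteness of $D$ comes out of the estimate itself.

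I expect the main obstacle to be the identification step: justifying $\curl u=\omega$ and excluding a nonzero harmonic remainder using only weak integrability of $\omega$.
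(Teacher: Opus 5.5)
Your proposal is correct and is essentially the paper's proof. Both use absolute convergence of the Biot--Savart integral, the distributional identities $\curl u=\omega$ and $\diver u=0$, the O'Neil convolution estimate into $L^{9/2,\infty}$, removal of the harmonic remainder by averaging over large balls, and the endpoint case $q=9/2$, $\ell=\infty$, $\gamma=2/3$ of Seregin--Wang. You prove absolute convergence through the Lorentz H\"older pairing of $L^{9/5,\infty}$ with $L^{9/4,1}$, whereas the paper uses the dyadic bound $\|\omega\|_{L^1(A_R)}\le CMR^{4/3}$; note that $L^{9/5,\infty}$ is the dual of $L^{9/4,1}$, not the other way round, though the pairing you need still holds. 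For the remainder, it is your fallback mean-value argument, not the pointwise-decay route, that matches the paper and actually works. The step you flag as the obstacle is settled in the paper as follows: write $\langle\curl u,\varphi\rangle=\langle\omega,\mathbb P\varphi\rangle$, then show $\langle\omega,\nabla\zeta\rangle=0$ for $\zeta=-(-\Delta)^{-1}\diver\varphi$ by testing $\diver\omega=0$ against $\zeta\chi_L$, with that annular $L^1$ bound killing the cutoff error.
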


\begin{proof}
Put $M=\|\omega\|_{L^{9/5,\infty}(\R^3)}$.  On every annulus
$A_R=\{R<|x|<2R\}$, the finite-measure Lorentz embedding gives
\begin{equation}
 \|\omega\|_{L^1(A_R)}
 \leq C|A_R|^{4/9}M
 \leq CMR^{4/3}.
 \label{eq:weakomegaL1}
\end{equation}
Summing over the dyadic annuli outside $B_1$, we obtain
\begin{equation}
 \int_{\R^3}\frac{|\omega(y)|}{1+|y|^2}\,dy<\infty,
 \label{eq:weighted-vorticity-L1}
\end{equation}
because the contribution of $A_{2^k}$ is bounded by
$CM2^{-2k/3}$.  For each fixed $x$, the kernel satisfies
$|x-y|^{-2}\leq C_x(1+|y|^2)^{-1}$ when $|y|>2|x|+1$.
On the remaining bounded set, $\omega$ is locally bounded and
$|x-y|^{-2}$ is locally integrable in three dimensions.  Thus the integral
in \eqref{eq:weakBiotSavart} is absolutely convergent.

Denote its right-hand side by $U$.  We briefly verify its zero-frequency
normalization.  Let
\[
 T=\curl(-\Delta)^{-1},
 \qquad
 \mathbb P=\text{the whole-space Leray projector}.
\]
On compactly supported smooth test fields,
\[
 T\nabla=0,
 \qquad
 T\curl=\mathbb P.
\]
For $\varphi\in C_c^\infty(\R^3;\R^3)$, the identities above give
\[
 \langle\curl U,\varphi\rangle
 =\langle U,\curl\varphi\rangle
 =\langle\omega,T\curl\varphi\rangle
 =\langle\omega,\mathbb P\varphi\rangle.
\]
This pairing is absolutely convergent: outside $\supp\varphi$,
$\mathbb P\varphi=O(|y|^{-3})$, and
\eqref{eq:weakomegaL1} makes the corresponding dyadic series summable.
Write
\[
 (I-\mathbb P)\varphi=\nabla\zeta,
 \qquad
 \zeta=-(-\Delta)^{-1}\diver\varphi.
\]
Since $\varphi$ is compactly supported and
$\int_{\R^3}\diver\varphi\,dx=0$, the lowest-order term in the
far-field expansion of the Newton potential vanishes.  Thus
$\zeta(y)=O(|y|^{-2})$ and $\nabla\zeta(y)=O(|y|^{-3})$.
If $\chi_L$ equals one on $B_L$, vanishes outside $B_{2L}$, and satisfies
$|\nabla\chi_L|\leq C/L$, then \eqref{eq:weakomegaL1} gives
\begin{align*}
 \int_{|y|>L}|\omega(y)||\nabla\zeta(y)|\,dy
 &\leq CM\sum_{k=0}^{\infty}(2^kL)^{-5/3},\\
 \int_{A_L}|\omega(y)||\zeta(y)||\nabla\chi_L(y)|\,dy
 &\leq CML^{-5/3}.
\end{align*}
Both quantities tend to zero.  Testing $\diver\omega=0$ against
$\zeta\chi_L$ and passing to the limit gives
$\langle\omega,\nabla\zeta\rangle=0$.  This orthogonality, together with
the standard Helmholtz identities \cite{Galdi2011,Stein1970}, gives
\begin{equation}
 \diver U=0,
 \qquad
 \curl U=\omega
 \quad\text{in }\mathcal D'(\R^3).
 \label{eq:weakBiotSavart-divcurl}
\end{equation}

The Lorentz Hardy--Littlewood--Sobolev inequality
\cite{ONeil1963,BennettSharpley1988,Stein1970} now gives
\begin{equation}
 \|U\|_{L^{9/2,\infty}(\R^3)}\leq CM.
 \label{eq:BiotSavart-HLS}
\end{equation}
Set $H=v-U$.  By \eqref{eq:weakBiotSavart-divcurl},
$\diver H=\curl H=0$.  Hence
$-\Delta H=\curl\curl H-\nabla\diver H=0$ in distributions, and Weyl's
lemma makes $H$ an entire smooth harmonic vector field.  For a fixed
$x_0\in\R^3$, the mean-value property
and finite-measure Lorentz embedding imply, with
$\langle f\rangle_B:=|B|^{-1}\int_B f$,
\[
 |H(x_0)|
 \leq \langle|v|\rangle_{B(x_0,R)}
     +\langle|U|\rangle_{B(x_0,R)}
 \leq \langle|v|\rangle_{B(x_0,R)}+CMR^{-2/3}.
\]
The first term tends to zero because $v(x)\to0$ as $|x|\to\infty$:
outside a fixed ball its average is arbitrarily small, while the
contribution of that fixed ball is $O(R^{-3})$.  Letting $R\to\infty$
gives $H(x_0)=0$.  Thus $v=U$, proving
\eqref{eq:weakBiotSavart} and \eqref{eq:endpoint-velocity}.

To obtain finite Dirichlet energy, we apply Seregin and Wang
\cite[Theorem~1.1(i)]{SereginWang2020}.  In their notation,
\[
 M_{\gamma,q,\ell}(R)
 :=R^{\gamma-3/q}
   \|v\|_{L^{q,\ell}(B_R\setminus B_{R/2})}.
\]
Take $q=9/2$, $\ell=\infty$, and $\gamma=2/3$.  Since
$\gamma-3/q=0$, their estimate becomes
\[
 D(v)
 \leq c\liminf_{R\to\infty}
 \|v\|_{L^{9/2,\infty}(B_R\setminus B_{R/2})}^3
 \leq C\|v\|_{L^{9/2,\infty}(\R^3)}^3.
\]
Combining this with \eqref{eq:endpoint-velocity} proves
\eqref{eq:automaticDbound}.
\end{proof}

\begin{remark}[Relation to earlier endpoint estimates]
\label{rem:automaticD-SereginWang}
The final step of \Cref{lem:automaticD} is exactly the endpoint case of
\cite[Theorem~1.1(i)]{SereginWang2020}; we do not claim the finite-energy
estimate as new.  The preceding argument verifies its velocity hypothesis
from the global weak-vorticity condition and fixes the whole-space
Biot--Savart normalization.  The new rigidity argument begins after this
lemma and uses the global weak-$L^{9/5}$ distribution of the vorticity,
without any norm smallness.
\end{remark}

\section{Proof of Theorem~\ref{thm:main}}
\label{sec:critical-blowdown}

This section contains the complete proof of \Cref{thm:main}.  It first
uses \Cref{lem:automaticD}, which combines the endpoint Biot--Savart
estimate with the energy estimate of Seregin and Wang.  It then selects
the critical blow-down, establishes the Bernoulli companion laws, proves
zero tangent flux, and returns to the original scale.  Put
\begin{equation}
 M=\|\omega\|_{L^{9/5,\infty}(\mathbb R^3)}.
 \label{eq:global-weak-vorticity-size}
\end{equation}
Applying \Cref{lem:automaticD} directly gives
\begin{equation}
 D:=\int_{\mathbb R^3}|\nabla v|^2\,dx<\infty.
 \label{eq:D-solution}
\end{equation}
Thus every use of finite Dirichlet energy below is a consequence of the
vorticity hypothesis, not an additional assumption.  Since $v$ is
divergence-free and $\nabla v\in L^2$, the standard whole-space
div--curl identity \cite{Galdi2011} gives
\begin{equation}
 \|\omega\|_{L^2(\mathbb R^3)}^2=D.
 \label{eq:vorticity-L2-energy}
\end{equation}
If $D=0$ the conclusion is immediate, so we assume from now on that
$D>0$.
Then also $M>0$, so the two splitting levels used below are well defined.
Constants below may depend on $M$ and $D$, but never on the blow-down
index.

\subsection{Logarithmic recovery of strong endpoint mass}

The weak endpoint does not provide a finite strong $L^{9/5}$ norm: its
distribution function may saturate the borderline tail
$\lambda^{-9/5}$.  The additional $L^2$ information in
\eqref{eq:vorticity-L2-energy} cuts off that tail at high amplitudes and
reduces the cumulative strong endpoint mass to logarithmic growth.  More
precisely, for a finite-measure set $E$, let
\[
 \mu_E(\lambda)=|E\cap\{|\omega|>\lambda\}|.
\]
The volume of $E$, the global weak-Lorentz bound $M$, and the recovered
$L^2$ estimate $D$ give, respectively,
\begin{equation}
 \mu_E(\lambda)\leq
 \min\bigl\{|E|,M^{9/5}\lambda^{-9/5},D\lambda^{-2}\bigr\}.
 \label{eq:three-distribution-bounds}
\end{equation}
The layer-cake identity reads
\begin{equation}
 \int_E|\omega|^{9/5}\,dx
 =\frac95\int_0^\infty \lambda^{4/5}\mu_E(\lambda)\,d\lambda.
 \label{eq:vorticity-layer-cake}
\end{equation}
For small $\lambda$ we use the volume bound, for intermediate $\lambda$
the weak endpoint bound $M$, and for large $\lambda$ the $L^2$ bound $D$.
Set
$\lambda_0=M|E|^{-5/9}$ and
$\lambda_1=(D/M^{9/5})^5$.  If $\lambda_0\leq\lambda_1$, then
\eqref{eq:vorticity-layer-cake} gives
\begin{align*}
 \int_E|\omega|^{9/5}\,dx
 &\leq \frac95\left(
 |E|\int_0^{\lambda_0}\lambda^{4/5}\,d\lambda
 +M^{9/5}\int_{\lambda_0}^{\lambda_1}\frac{d\lambda}{\lambda}
 +D\int_{\lambda_1}^{\infty}\lambda^{-6/5}\,d\lambda
 \right)\\
 &=M^{9/5}\left(1+\frac95
       \log\frac{\lambda_1}{\lambda_0}+9\right).
\end{align*}
Thus the logarithm comes only from the interval on which the weak
$L^{9/5}$ distribution bound is active.  If $\lambda_0>\lambda_1$, put
$\lambda_*=(D/|E|)^{1/2}$ and use only the volume and $L^2$ bounds.  Then
\[
 \int_E|\omega|^{9/5}\,dx
 \leq \frac95\left(
 |E|\int_0^{\lambda_*}\lambda^{4/5}\,d\lambda
 +D\int_{\lambda_*}^{\infty}\lambda^{-6/5}\,d\lambda
 \right)
 \leq C D^{9/10}|E|^{1/10}.
\]
In this second case the inequality $\lambda_0>\lambda_1$ is equivalent to
$|E|<M^{18}D^{-9}$, and hence the last expression is at most
$CM^{9/5}$.  In either case,
\begin{equation}
 \int_E|\omega|^{9/5}\,dx
 \leq C_{M,D}\bigl(1+\log(2+|E|)\bigr).
 \label{eq:logarithmic-strong-mass}
\end{equation}
In particular, if
\begin{equation}
 A_{2^k}:=\{x\in\mathbb R^3:2^k<|x|<2^{k+1}\},
 \qquad
 a_k:=\int_{A_{2^k}}|\omega|^{9/5}\,dx,
 \qquad k\geq0,
 \label{eq:dyadic-vorticity-mass}
\end{equation}
then \eqref{eq:logarithmic-strong-mass}, applied to
$B_{2^{N+1}}\setminus B_1$, implies
\begin{equation}
 \sum_{k=0}^N a_k\leq C_0(N+1),
 \qquad N\geq0.
 \label{eq:linear-cumulative-dyadic-mass}
\end{equation}

Estimate \eqref{eq:linear-cumulative-dyadic-mass} controls only the
average dyadic mass and does not exclude occasional large values of
$a_k$.  The next elementary lemma shows that these exceptional values
cannot obstruct every possible starting scale.  More precisely, we can
select increasingly long one-sided blocks on which the $m$-th member of
the block has a bound depending only on $m$.

\begin{lemma}[Dyadic good-scale selection]
\label{lem:tempered-dyadic-selection}
If a nonnegative sequence satisfies
\eqref{eq:linear-cumulative-dyadic-mass}, then there are integers
$n_j\to\infty$, indexed by $j\geq1$, and finite constants $B_m$,
indexed by $m\geq0$, such that
\begin{equation}
 a_{n_j+m}\leq B_m
 \quad\text{whenever $m\geq0$ and $j\geq\max\{1,m\}$}.
 \label{eq:selected-dyadic-mass}
\end{equation}
In fact, one may take $B_m=K_0(m+1)^2$, where
$K_0=\max\{1,16C_0\}$.
\end{lemma}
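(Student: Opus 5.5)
The plan is a counting (Chebyshev-type) argument. For each $N\ge1$ I will show that most starting indices $n$ in the window $[N,2N)$ are simultaneously good for all offsets $m\in\{0,\dots,N\}$. I will then put $n_j$ equal to a good index in the window with $N=N_j:=2^j$. This gives $n_j\ge 2^j\to\infty$, and since $j\le 2^j=N_j$, every pair $(j,m)$ with $m\le j$ is covered. For the constraint $j\ge\max\{1,m\}$ I only need $m\le j\le N_j$, so the block length $N_j+1$ is more than enough.

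\emph{Step 1 (mass in a window).} Fix $N\ge1$. For $n\in[N,2N)$ and $0\le m\le N$ the index $n+m$ lies in $[N,3N)$. By \eqref{eq:linear-cumulative-dyadic-mass},
\[
 \sum_{k=N}^{3N-1}a_k\le \sum_{k=0}^{3N-1}a_k\le 3C_0N\le 4C_0N.
\]

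\emph{Step 2 (counting bad starts).} For fixed $m$, call $n\in[N,2N)$ $m$-bad if $a_{n+m}>K_0(m+1)^2$. The map $n\mapsto n+m$ is injective, so Chebyshev's inequality and Step 1 bound the number of $m$-bad $n$ by $4C_0N/(K_0(m+1)^2)$. Summing over $m\ge0$ and using $\sum_{m\ge0}(m+1)^{-2}=\pi^2/6<2$, the number of $n\in[N,2N)$ that are $m$-bad for some $m\le N$ is at most
\[
 \frac{4C_0N}{K_0}\cdot 2\le \frac{8C_0N}{16C_0}=\frac N2<N.
\]
If $C_0=0$ the sequence vanishes and any $n$ works. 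Since the window contains $N$ integers, some $n\in[N,2N)$ satisfies $a_{n+m}\le K_0(m+1)^2$ for every $0\le m\le N$.

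\emph{Step 3 (assembly).} Take $N_j=2^j$ and let $n_j$ be the good index from Step 2. Then $n_j\to\infty$, and for $m\le j\le 2^j$ we get $a_{n_j+m}\le B_m=K_0(m+1)^2$, which is \eqref{eq:selected-dyadic-mass}.

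There is no real obstacle. The only points needing care are bookkeeping. First, the window for $n+m$ must stay inside a range where the cumulative bound is linear in $N$. Second, the summable weights $(m+1)^{-2}$ must make the total bad count strictly less than the window length; this is exactly what fixes the choice $K_0=\max\{1,16C_0\}$.
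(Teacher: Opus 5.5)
Your proposal is correct and follows essentially the same route as the paper. Both arguments run a Chebyshev count of bad starting indices in a window $[N,2N]$, take a union bound over offsets made summable by the weights $1/B_m=1/(K_0(m+1)^2)$, and choose windows successively farther out. The differences are only bookkeeping: you allow all offsets $m\le N$ and fix the windows explicitly as $N_j=2^j$, whereas the paper uses offsets $m\le L=j<N_j$ with $N_j>\max\{j,2n_{j-1}\}$; neither choice affects the argument.
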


\begin{proof}
Set $K_0=\max\{1,16C_0\}$ and $B_m=K_0(m+1)^2$.  The reason for allowing
$B_m$ to grow quadratically is that
\begin{equation}
 \sum_{m=0}^{\infty}\frac1{B_m}
 =\frac1{K_0}\sum_{m=0}^{\infty}\frac1{(m+1)^2}<\infty.
 \label{eq:summable-bad-thresholds}
\end{equation}

We first find one finite block of good indices.  Fix integers $L\geq0$
and $N>L$.  The possible starting indices are the integers in
$[N,2N]$.  For a fixed $m\in\{0,\ldots,L\}$, call a starting index $n$
bad at offset $m$ if $a_{n+m}>B_m$, and denote the set of such indices
by
\[
 \mathcal B_m(N)
 :=\{n\in\mathbb Z:N\leq n\leq2N,\ a_{n+m}>B_m\}.
\]
Every member of $\mathcal B_m(N)$ contributes more than $B_m$ to the
corresponding sum.  Therefore
\begin{align}
 B_m\lvert\mathcal B_m(N)\rvert
 &\leq \sum_{n\in\mathcal B_m(N)}a_{n+m} \\
 &\leq \sum_{r=0}^{2N+L}a_r
 \leq C_0(2N+L+1)
 \leq 3C_0N.                                      \label{eq:number-bad-centers}
\end{align}
Here the last inequality uses $N>L$.  Thus the number of candidates
that are bad for at least one offset $m\in\{0,\ldots,L\}$ is at most
\begin{align}
 \left\lvert\bigcup_{m=0}^{L}\mathcal B_m(N)\right\rvert
 &\leq \sum_{m=0}^{L}\lvert\mathcal B_m(N)\rvert \\
 &\leq 3C_0N\sum_{m=0}^{L}\frac1{B_m}
 \leq \frac{3\pi^2}{96}N
 <\frac N2.                                      \label{eq:union-bad-centers}
\end{align}
Since $[N,2N]\cap\mathbb Z$ contains $N+1$ candidates, at least one of
them lies outside this union.  Hence there exists $n\in[N,2N]$ such
that
\begin{equation}
 a_{n+m}\leq B_m,
 \qquad 0\leq m\leq L.                            \label{eq:finite-good-block}
\end{equation}

It remains to choose these blocks successively farther out.  Put
$n_0=0$.  After $n_{j-1}$ has been chosen, take an integer
$N_j>\max\{j,2n_{j-1}\}$ and apply
\eqref{eq:finite-good-block} with $L=j$ and $N=N_j$.  This gives
$n_j\in[N_j,2N_j]$ satisfying
\[
 a_{n_j+m}\leq B_m,
 \qquad 0\leq m\leq j.
\]
In particular, $n_j>2n_{j-1}$, so $n_j\to\infty$.  Moreover, whenever
$m\geq0$ and $j\geq\max\{1,m\}$, the offset $m$ belongs to the range
$0\leq m\leq j$ used in the construction of $n_j$.  Hence
$a_{n_j+m}\leq B_m$, which is precisely
\eqref{eq:selected-dyadic-mass}.  For each fixed $m$, the estimate holds
for every $j\geq\max\{1,m\}$; no single threshold in $j$ is asserted to
work for all $m$ simultaneously.
\end{proof}

Fix the sequence supplied by \Cref{lem:tempered-dyadic-selection} and set
\begin{equation}
 R_j=2^{n_j}.
 \label{eq:selected-blowdown-scales}
\end{equation}
For each fixed $m$, the constant $B_m$ is independent of $j$.  Since
$2^{n_j+m}=2^mR_j$, \eqref{eq:selected-dyadic-mass} controls the annuli
$A_{R_j},A_{2R_j},A_{4R_j},\ldots$ relative to the current scale $R_j$.
The estimate is one-sided relative to the base index $n_j$: it controls
offsets $m\geq0$ and makes no uniform assertion about negative offsets.
A smaller annulus may, of course, be controlled relative to another
selected scale.

\subsection{Endpoint estimates and compactness of the blow-down}

This subsection has two distinct stages.  First, the global weak
vorticity hypothesis gives scale-invariant weak-Lorentz bounds on every
compact subset of $\mathbb R^3\setminus\{0\}$; these bounds produce a
blow-down limit $V$.  Second, the dyadic scales selected in
\Cref{lem:tempered-dyadic-selection} give strong $L^{9/5}$ bounds on
each fixed rescaled annulus outside the unit sphere.  Local Hodge
estimates then place the same limit $V$ in
$W^{1,9/5}_{\mathrm{loc}}(\{|y|>1\})$.  We keep the two stages separate
because they use different information and have different ranges of
validity.

\noindent\emph{Velocity estimates on the original scale.}
The whole-space Biot--Savart representation
\eqref{eq:weakBiotSavart} and the critical velocity estimate
\begin{equation}
 \|v\|_{L^{9/2,\infty}(\mathbb R^3)}\leq CM,
 \label{eq:velocity-critical-Lorentz}
\end{equation}
follow from \Cref{lem:automaticD}.  We use this weak endpoint bound and
its strong sub-endpoint consequence on annuli.  To obtain the latter, put
$p_*=9/2$ and
$K_v=\|v\|_{L^{p_*,\infty}(\mathbb R^3)}$.  For a measurable set
$E\subset\mathbb R^3$ of finite measure,
\[
 \bigl|E\cap\{|v|>\lambda\}\bigr|
 \leq \min\bigl\{|E|,K_v^{p_*}\lambda^{-p_*}\bigr\},
 \qquad \lambda>0.
\]
Let $\lambda_E=K_v|E|^{-1/p_*}$.  For $1<q<p_*$, the layer-cake formula,
split at $\lambda_E$, gives
\begin{align*}
 \|v\|_{L^q(E)}^q
 &=q\int_0^\infty
     \lambda^{q-1}\bigl|E\cap\{|v|>\lambda\}\bigr|\,d\lambda \\
 &\leq q|E|\int_0^{\lambda_E}\lambda^{q-1}\,d\lambda
      +qK_v^{p_*}\int_{\lambda_E}^\infty
         \lambda^{q-p_*-1}\,d\lambda \\
 &=\frac{p_*}{p_*-q}K_v^q|E|^{1-q/p_*}.
\end{align*}
Hence
\[
 \|v\|_{L^q(E)}
 \leq \left(\frac{p_*}{p_*-q}\right)^{1/q}
      |E|^{1/q-1/p_*}K_v.
\]
Applying this estimate to $E=A_R$, using
$|A_R|=(28\pi/3)R^3$, and recalling
\eqref{eq:velocity-critical-Lorentz}, we obtain
\begin{equation}
 \|v\|_{L^q(A_R)}
 \leq C_q M R^{-2/3+3/q},
 \qquad 1<q<\frac92,\quad R>0.
 \label{eq:velocity-strong-shell}
\end{equation}

\noindent\emph{Scale-invariant weak bounds for the blow-down.}
At the selected scales \eqref{eq:selected-blowdown-scales}, set
\begin{equation}
 V_j(y)=R_j^{2/3}v(R_jy),
 \qquad
 \curl V_j(y)=R_j^{5/3}\omega(R_jy).
 \label{eq:blowdown-vorticity}
\end{equation}
Both $V_j$ and $v$ are divergence-free.  We repeatedly use the
scaling rule
\[
 \|a f(b\,\cdot)\|_{L^{p,\infty}(\mathbb R^3)}
 =|a|b^{-3/p}\|f\|_{L^{p,\infty}(\mathbb R^3)}.
\]
Because $3/(9/5)=5/3$, the vorticity norm is invariant:
\begin{equation}
 \|\curl V_j\|_{L^{9/5,\infty}(\mathbb R^3)}
 =R_j^{5/3-3/(9/5)}M=M.
 \label{eq:scaled-curl-uniform}
\end{equation}
Similarly,
$3/(9/2)=2/3$, and \eqref{eq:velocity-critical-Lorentz} gives
\begin{equation}
 \|V_j\|_{L^{9/2,\infty}(\mathbb R^3)}
 =R_j^{2/3-3/(9/2)}
   \|v\|_{L^{9/2,\infty}(\mathbb R^3)}
 \leq CM.
 \label{eq:scaled-velocity-uniform}
\end{equation}

To compare the two exponents in the local div--curl estimate below, we
use the following elementary finite-measure embedding.  If $|E|<\infty$
and $1\leq q<p<\infty$, then
\begin{equation}
 \|f\|_{L^{q,\infty}(E)}
 \leq |E|^{1/q-1/p}\|f\|_{L^{p,\infty}(E)}.
 \label{eq:finite-measure-weak-embedding}
\end{equation}
Indeed, if
$\mu_{f,E}(\lambda)=|E\cap\{|f|>\lambda\}|$, then
\[
 \lambda\mu_{f,E}(\lambda)^{1/q}
 \leq |E|^{1/q-1/p}
       \lambda\mu_{f,E}(\lambda)^{1/p};
\]
taking the supremum over $\lambda>0$ proves
\eqref{eq:finite-measure-weak-embedding}.  With
$E=K'$, $p=9/2$, and $q=9/5$, it follows from
\eqref{eq:scaled-velocity-uniform} that
\begin{equation}
 \|V_j\|_{L^{9/5,\infty}(K')}
 \leq |K'|^{1/3}
       \|V_j\|_{L^{9/2,\infty}(K')}
 \leq C|K'|^{1/3}M.
 \label{eq:scaled-velocity-lower-exponent}
\end{equation}

We now apply the localized div--curl estimate.  If $K$ and $K'$ are
bounded open sets with
$K\Subset K'\Subset\mathbb R^3\setminus\{0\}$, then
\begin{equation}
 \|\nabla V_j\|_{L^{9/5,\infty}(K)}
 \leq C_{K,K'}\bigl(
    \|\curl V_j\|_{L^{9/5,\infty}(K')}
   +\|V_j\|_{L^{9/5,\infty}(K')}\bigr).
 \label{eq:local-Hodge}
\end{equation}
For completeness, choose
$\chi\in C_c^\infty(K')$ with $\chi=1$ near $K$.  Applying the
whole-space Calder\'on--Zygmund div--curl estimate
\cite{Galdi2011,Stein1970} to $\chi V_j$ gives
\eqref{eq:local-Hodge}, because
\[
 \curl(\chi V_j)=\chi\curl V_j+\nabla\chi\times V_j,
 \qquad
 \diver(\chi V_j)=\nabla\chi\cdot V_j.
\]
The terms containing $\nabla\chi$ account for the lower-order norm of
$V_j$.  Combining \eqref{eq:scaled-curl-uniform},
\eqref{eq:scaled-velocity-lower-exponent}, and
\eqref{eq:local-Hodge}, we obtain
\begin{equation}
 \sup_j\|\nabla V_j\|_{L^{9/5,\infty}(K)}
 \leq C_{K,K'}M.
 \label{eq:uniform-local-gradient}
\end{equation}
Here $K'$ is fixed in the rescaled variables, so its measure and all
cutoff constants are independent of $j$.

\noindent\emph{Local compactness and passage of the weak endpoint.}
Choose $s_0<9/5$ sufficiently close to $9/5$ that
\begin{equation}
 s_0>\frac32,
 \qquad
 3<q_0<\frac{3s_0}{3-s_0}<\frac92.
 \label{eq:compactness-exponents}
\end{equation}
The first inequality ensures that the Sobolev exponent
$s_0^*=3s_0/(3-s_0)$ is larger than $3$; the choice $q_0>3$ will later
make the energy currents converge in an exponent greater than one.

Let $G\Subset\mathbb R^3\setminus\{0\}$ be a fixed bounded Lipschitz
domain.
Since $s_0<9/5$, the finite-measure strong embedding
$L^{9/5,\infty}(G)\hookrightarrow L^{s_0}(G)$ takes the explicit form
\[
 \|f\|_{L^{s_0}(G)}
 \leq C_{s_0}|G|^{1/s_0-5/9}
       \|f\|_{L^{9/5,\infty}(G)}.
\]
Consequently,
\eqref{eq:scaled-velocity-lower-exponent} and
\eqref{eq:uniform-local-gradient} give
\begin{equation}
 \sup_j\|V_j\|_{W^{1,s_0}(G)}
 \leq C_{G,s_0}M.
 \label{eq:uniform-local-W1s0}
\end{equation}
Thus the Rellich--Kondrachov theorem gives strong convergence in
$L^{q_0}(G)$ because $q_0<s_0^*$.  Applying this argument on a nested
exhaustion by bounded Lipschitz domains and taking a diagonal subsequence,
we find a vector field $V$ such that
\begin{equation}
 V_j\longrightarrow V
 \quad\text{strongly in }L^{q_0}_{\mathrm{loc}}
       (\mathbb R^3\setminus\{0\}).
 \label{eq:strong-V-compactness}
\end{equation}
At this point Rellich compactness identifies the velocity limit but does
not by itself preserve the weak endpoint gradient bound.  The following
lemma supplies precisely that final step.

\begin{lemma}[Stability of local weak-Lorentz bounds]
\label{lem:weak-Lorentz-limit}
Let $\Omega\subset\mathbb R^3$ be open, let $1<p,r<\infty$, and let
$U_j\in W^{1,1}_{\mathrm{loc}}(\Omega;\mathbb R^N)$ satisfy
\[
 U_j\longrightarrow U
 \quad\text{strongly in }L^1_{\mathrm{loc}}(\Omega).
\]
Assume that, for every bounded open set $G\Subset\Omega$,
\begin{equation}
 \sup_j\bigl(
   \|U_j\|_{L^{r,\infty}(G)}
   +\|\nabla U_j\|_{L^{p,\infty}(G)}
 \bigr)<\infty.
 \label{eq:abstract-weak-Lorentz-bounds}
\end{equation}
Then
\[
 U\in L^{r,\infty}_{\mathrm{loc}}(\Omega),
 \qquad
 \nabla U\in L^{p,\infty}_{\mathrm{loc}}(\Omega).
\]
After passing to a subsequence,
\[
 \nabla U_j\stackrel{*}{\rightharpoonup}\nabla U
 \quad\text{locally in }
 L^{p,\infty}=(L^{p',1})^*,
 \qquad p'=\frac{p}{p-1}.
\]
If $N=3$, then also
$\curl U\in L^{p,\infty}_{\mathrm{loc}}(\Omega)$ and
$\curl U_j\stackrel{*}{\rightharpoonup}\curl U$ locally in
$L^{p,\infty}$.
\end{lemma}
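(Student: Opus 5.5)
The plan is to use duality $L^{p,\infty}=(L^{p',1})^*$ together with lower semicontinuity of the weak-Lorentz quasinorm under the relevant convergence. Fix a bounded open $G\Subset\Omega$.

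\emph{The function $U$.} Passing to a subsequence, $U_j\to U$ a.e.\ on $G$. For each $\lambda>0$, Fatou's lemma applied to indicators gives
\[
|G\cap\{|U|>\lambda\}|\le\liminf_j|G\cap\{|U_j|>\lambda\}|.
\]
Indeed, pointwise $\mathbf 1_{\{|U|>\lambda\}}\le\liminf_j\mathbf 1_{\{|U_j|>\lambda\}}$, because the set $\{|U|>\lambda\}$ is open in the value variable. Multiplying by $\lambda^r$ and taking the supremum over $\lambda$ yields $\|U\|_{L^{r,\infty}(G)}\le\liminf_j\|U_j\|_{L^{r,\infty}(G)}<\infty$, with no functional analysis needed.

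\emph{The gradient.} Since $1<p<\infty$, the space $L^{p',1}(G)$ is separable and $L^{p,\infty}(G)$ is, up to equivalent norms, its dual. The bound \eqref{eq:abstract-weak-Lorentz-bounds} and Banach--Alaoglu in its sequential form, which applies because the predual is separable, give a subsequence with $\nabla U_j\stackrel{*}{\rightharpoonup}W$ in $L^{p,\infty}(G)$. Using an exhaustion of $\Omega$ by $G_k\Subset\Omega$ and a diagonal argument, we obtain a single subsequence that works on every $G_k$.

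\emph{Identifying the limit.} For $\varphi\in C_c^\infty(G)$, which lies in $L^{p',1}$, we pair:
\[
\int W\varphi=\lim_j\int\partial_iU_j\,\varphi=-\lim_j\int U_j\,\partial_i\varphi=-\int U\,\partial_i\varphi.
\]
The last step uses strong $L^1_{\mathrm{loc}}$ convergence against the bounded function $\partial_i\varphi$. Hence $W=\nabla U$ in distributions, so $U\in W^{1,1}_{\mathrm{loc}}$ and $\nabla U\in L^{p,\infty}_{\mathrm{loc}}$. Since the limit is identified independently of the subsequence, the weak-$*$ convergence in fact holds along the whole subsequence chosen. The norm bound follows from weak-$*$ lower semicontinuity of the dual norm, which is equivalent to the quasinorm.

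\emph{The curl.} The curl is a fixed linear combination of entries of $\nabla U$, so the case $N=3$ follows immediately. Convergence $\curl U_j\stackrel{*}{\rightharpoonup}\curl U$ is obtained by testing against $\psi\in L^{p',1}$ and moving the constant coefficients onto $\psi$.

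The only delicate point is the duality. The space $L^{p,\infty}$ is only quasi-normed, and the identification $(L^{p',1})^*=L^{p,\infty}$ holds with an equivalent norm. Both facts are standard for $1<p<\infty$, via the Calderón/Hunt theory in Bennett--Sharpley. Separability of $L^{p',1}(G)$, where $p'<\infty$ and simple functions are dense, is what gives sequential rather than merely net compactness. Once these two facts are cited, the rest of the argument is routine.
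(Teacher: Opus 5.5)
Your proposal is correct and follows essentially the same route as the paper: Fatou on distribution functions along an a.e.-convergent subsequence for $U$, then sequential weak-$*$ compactness in $L^{p,\infty}(G)=(L^{p',1}(G))^*$ via separability of the predual, identification of the limit with $\nabla U$ by integrating by parts against $C_c^\infty(G)$, a diagonal exhaustion, and the curl as a fixed linear combination of gradient entries. Your Fatou step at the level $\lambda$ itself is a slightly cleaner version of the paper's $\lambda-\varepsilon$ argument, but it gives the lower-semicontinuity inequality only along the a.e.-convergent subsequence, whereas the paper first extracts a subsequence realizing the $\liminf$; this is harmless, since membership $U\in L^{r,\infty}_{\mathrm{loc}}(\Omega)$ is all the lemma asserts.
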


\begin{proof}
Fix a bounded open set $G\Subset\Omega$.  To prove the lower
semicontinuity of the velocity norm, first choose a subsequence such
that
\[
 \lim_{j\to\infty}\|U_j\|_{L^{r,\infty}(G)}
 =\liminf_{k\to\infty}\|U_k\|_{L^{r,\infty}(G)}.
\]
Strong convergence in $L^1(G)$ allows us to pass to a further
subsequence such that $U_j\to U$ almost everywhere on $G$.  For
$\lambda>0$ and
$0<\varepsilon<\lambda$, almost-everywhere convergence gives, up to a
null set,
\[
 \{|U|>\lambda\}\cap G
 \subset
 \liminf_{j\to\infty}
 \bigl(\{|U_j|>\lambda-\varepsilon\}\cap G\bigr).
\]
Consequently,
\begin{align*}
 \lambda
 \bigl|\{|U|>\lambda\}\cap G\bigr|^{1/r}
 &\leq
 \frac{\lambda}{\lambda-\varepsilon}
 \liminf_{j\to\infty}
 (\lambda-\varepsilon)
 \bigl|\{|U_j|>\lambda-\varepsilon\}\cap G\bigr|^{1/r}\\
 &\leq
 \frac{\lambda}{\lambda-\varepsilon}
 \liminf_{j\to\infty}
 \|U_j\|_{L^{r,\infty}(G)}.
\end{align*}
Letting $\varepsilon\downarrow0$ and then taking the supremum over
$\lambda>0$ proves
\begin{equation}
 \|U\|_{L^{r,\infty}(G)}
 \leq
 \liminf_{j\to\infty}
 \|U_j\|_{L^{r,\infty}(G)}.
 \label{eq:weak-Lorentz-Fatou-local}
\end{equation}

For the derivatives, equip $L^{p,\infty}(G)$ with the equivalent Banach
norm for which
$L^{p,\infty}(G)=(L^{p',1}(G))^*$; see
\cite{BennettSharpley1988}.  The second bound in
\eqref{eq:abstract-weak-Lorentz-bounds}, weak-* compactness, and the
separability of $L^{p',1}(G)$ give a further subsequence and
$H\in L^{p,\infty}(G)$ such that
\[
 \nabla U_j\stackrel{*}{\rightharpoonup}H
 \quad\text{in }L^{p,\infty}(G).
\]
For every
$\Phi\in C_c^\infty(G;\mathbb R^{N\times3})$, integration by parts
and the strong $L^1(G)$ convergence yield
\begin{align*}
 \int_G H:\Phi\,dy
 &=\lim_{j\to\infty}\int_G\nabla U_j:\Phi\,dy\\
 &=-\lim_{j\to\infty}\int_G
       U_j\cdot\diver\Phi\,dy
 =-\int_G U\cdot\diver\Phi\,dy.
\end{align*}
Thus $H=\nabla U$ in the sense of distributions.  When $N=3$,
$\curl U$ is a fixed linear combination of the entries of $\nabla U$,
so the corresponding membership and weak-* convergence follow as well.
A nested exhaustion of $\Omega$ and a diagonal subsequence give the
stated local conclusions.
\end{proof}

Apply \Cref{lem:weak-Lorentz-limit} with
\[
 \Omega=\mathbb R^3\setminus\{0\},
 \qquad
 p=\frac95,
 \qquad
 r=\frac92,
 \qquad
 U_j=V_j.
\]
The strong convergence in \eqref{eq:strong-V-compactness} implies
strong local $L^1$ convergence, while
\eqref{eq:scaled-velocity-uniform} and
\eqref{eq:uniform-local-gradient} are exactly the two uniform endpoint
bounds required by the lemma.  Therefore,
\begin{equation}
 \nabla V,\curl V\in L^{9/5,\infty}_{\mathrm{loc}}
       (\mathbb R^3\setminus\{0\}),
 \qquad
 V\in L^{9/2,\infty}_{\mathrm{loc}}
       (\mathbb R^3\setminus\{0\}).
 \label{eq:weak-endpoint-tangent-bounds}
\end{equation}

\noindent\emph{Local Lebesgue endpoint regularity in the exterior.}
The preceding argument used only the global weak-Lorentz bounds and
therefore produced weak endpoint control throughout the punctured
space.  The selected dyadic scales contain additional strong
$L^{9/5}$ information on each fixed exterior annulus.  The quantifiers
are important: the lower bound on $j$ may depend on the annulus index
$m$, and no estimate uniform in all $m\geq0$ is asserted or needed.

Indeed, critical scaling and \eqref{eq:selected-dyadic-mass} give, for
every fixed $m\geq0$,
\begin{equation}
 \int_{A_{2^m}}|\curl V_j|^{9/5}\,dy
 =a_{n_j+m}\leq B_m
 \quad\text{whenever }j\geq\max\{1,m\}.
 \label{eq:exterior-selected-vorticity}
\end{equation}
The velocity satisfies a corresponding strong bound at the same exponent.
Taking $q=9/5$ in
\eqref{eq:velocity-strong-shell} and using
$A_{2^{n_j+m}}=R_jA_{2^m}$, we obtain
\begin{equation}
 \|V_j\|_{L^{9/5}(A_{2^m})}
 =R_j^{-1}\|v\|_{L^{9/5}(A_{2^{n_j+m}})}
 \leq C M 2^m.
 \label{eq:exterior-selected-velocity}
\end{equation}

Fix bounded smooth open sets
$G\Subset G'\Subset\{|y|>1\}$.  Since $G'$ is bounded, there is an
integer $N=N(G')\geq0$ such that
\[
 G'\subset\{1<|y|<2^{N+1}\}.
\]
Thus the finite family $\{A_{2^m}:0\leq m\leq N\}$ covers $G'$ up to
its spherical boundaries, which have measure zero.  For every
$j\geq\max\{1,N\}$, \eqref{eq:exterior-selected-vorticity} holds
simultaneously for all $0\leq m\leq N$.  Consequently,
\begin{align*}
 \|\curl V_j\|_{L^{9/5}(G')}^{9/5}
 &\leq \sum_{m=0}^{N}B_m,\\
 \|V_j\|_{L^{9/5}(G')}^{9/5}
 &\leq C M^{9/5}\sum_{m=0}^{N}2^{9m/5}.
\end{align*}
The sums are finite and depend on $G'$ but not on $j$.  Hence
\[
 \|\curl V_j\|_{L^{9/5}(G')}
 +\|V_j\|_{L^{9/5}(G')}
 \leq C_{G'}.
\]
Repeating the cutoff argument for \eqref{eq:local-Hodge}, now with the
strong $L^{9/5}$ Calder\'on--Zygmund estimate
\cite{Stein1970}, gives
\begin{equation}
 \|\nabla V_j\|_{L^{9/5}(G)}
 \leq C_{G,G'}\bigl(
   \|\curl V_j\|_{L^{9/5}(G')}
   +\|V_j\|_{L^{9/5}(G')}\bigr)
 \leq C_{G,G'}.
 \label{eq:exterior-strong-Hodge}
\end{equation}
After discarding the finitely many indices below $\max\{1,N\}$, the
sequence is therefore bounded in the reflexive space
$W^{1,9/5}(G)$.  Since $q_0>9/5$ and $|G|<\infty$,
\eqref{eq:strong-V-compactness} implies
\[
 V_j\longrightarrow V
 \quad\text{strongly in }L^{9/5}(G).
\]
In particular, $\nabla V_j\to\nabla V$ in the sense of distributions.
The uniform $L^{9/5}(G)$ bound for $\nabla V_j$, reflexivity, and the
uniqueness of the distributional limit show that no new
$G$-dependent subsequence is required: every weakly convergent
subsequence of $\nabla V_j$ has the same limit $\nabla V$.  Therefore
\[
 \nabla V_j\rightharpoonup\nabla V,
 \qquad
 \curl V_j\rightharpoonup\curl V
 \quad\text{weakly in }L^{9/5}(G).
\]
It follows that $V\in W^{1,9/5}(G)$ and
$\curl V\in L^{9/5}(G)$.  Since the argument uses the already fixed
blow-down subsequence and applies to every such pair $G\Subset G'$, it
gives
$V\in W^{1,9/5}_{\mathrm{loc}}(\{|y|>1\})$.  The Sobolev embedding
$W^{1,9/5}(G)\hookrightarrow L^{9/2}(G)$ then yields
\begin{equation}
 \nabla V,\curl V\in L^{9/5}_{\mathrm{loc}}(\{|y|>1\}),
 \qquad
 V\in L^{9/2}_{\mathrm{loc}}(\{|y|>1\}).
 \label{eq:endpoint-tangent-bounds}
\end{equation}
The constants may grow with the exterior compact set; no global strong
$L^{9/5}$ or $L^{9/2}$ bound on $\{|y|>1\}$ is claimed.  The local
bounds above are used for the Bernoulli companion law, while the later
flux argument uses the separate scale-invariant weak-Lorentz bounds.

\subsection{Pressure normalization and compactness}

This subsection has three purposes.  We first choose one pressure
representative before rescaling, so that no scale-dependent additive
constant can appear.  We then prove local compactness of the rescaled
pressures by separating a local Riesz-transform term from a harmonic
remainder.  Finally, we verify that the limiting pressure retains the same
whole-space Riesz-transform normalization.

\noindent\emph{Canonical normalization.}
An additive pressure constant is harmless in \eqref{eq:NS}, but its
factor $R_j^{4/3}$ under blow-down would destroy compactness.  We
therefore fix one whole-space representative before rescaling.
By the homogeneous Sobolev theorem and \eqref{eq:D-solution}, there is a
constant vector $c\in\mathbb R^3$ such that
\[
 \|v-c\|_{L^6(\mathbb R^3)}
 \leq C\|\nabla v\|_{L^2(\mathbb R^3)}.
\]
The condition $v(x)\to0$ as $|x|\to\infty$ forces $c=0$: otherwise
$v-c$ would converge to the nonzero constant $-c$ and could not belong to
$L^6(\mathbb R^3)$.  Hence $v\in L^6(\mathbb R^3)$ and
$v_i v_k\in L^3(\mathbb R^3)$.  Define the canonical pressure by
\[
 p_{\mathrm{can}}
 :=\mathcal R_i\mathcal R_k(v_iv_k).
\]
The $L^3$ boundedness of the Riesz transforms gives
$p_{\mathrm{can}}\in L^3(\mathbb R^3)$.
The standard whole-space pressure-recovery theorem for finite-energy
steady flows \cite{Galdi2011} gives
$\nabla p=\nabla p_{\mathrm{can}}$ in distributions.  Thus the original
pressure and $p_{\mathrm{can}}$ differ only by one constant.  After
changing the original pressure by this constant, we write from now on
\begin{equation}
 p=p_{\mathrm{can}}
 =\mathcal R_i\mathcal R_k(v_iv_k)\in L^3(\mathbb R^3),
 \label{eq:canonical-pressure}
\end{equation}
where $\mathcal R_i$ are the Riesz transforms and repeated indices are
summed.  In particular,
\begin{equation}
 \Delta p=-\partial_i\partial_k(v_iv_k).
 \label{eq:pressure-Poisson}
\end{equation}
The pressure-recovery theorem, rather than the Poisson equation alone, is
what excludes a nonconstant harmonic pressure remainder.  The choice in
\eqref{eq:canonical-pressure} is made once, before the blow-down; choosing
a new constant after each rescaling would introduce an uncontrolled term
of size $R_j^{4/3}$.

Set
\begin{equation}
 P_j(y)=R_j^{4/3}p(R_jy),
 \qquad
 Q_j=P_j+\frac12|V_j|^2.
 \label{eq:scaled-pressure-head}
\end{equation}
The normalization commutes with dilation:
\begin{equation}
 P_j=\mathcal R_i\mathcal R_k(V_{j,i}V_{j,k}).
 \label{eq:scaled-canonical-pressure}
\end{equation}
Indeed, both sides are obtained from their unscaled counterparts by the
same factor $R_j^{4/3}$, while the second-order Riesz transform is
homogeneous of degree zero.  Thus no $j$-dependent pressure constant is
introduced.

\medskip
\noindent\emph{Scale-invariant pressure bounds.}
Taking $q=2$ in \eqref{eq:velocity-strong-shell} gives
\[
 \|v\|_{L^2(A_\rho)}
 \leq C\rho^{-2/3+3/2}=C\rho^{5/6}.
\]
After squaring, we obtain

\begin{equation}
 \int_{A_\rho}|v|^2\,dx
 \leq C\rho^{5/3}\qquad(\rho>0).
 \label{eq:velocity-L2-shell}
\end{equation}
Moreover, \eqref{eq:velocity-critical-Lorentz}, O'Neil's product
inequality, and Calder\'on--Zygmund boundedness on Lorentz spaces give
\begin{equation}
 \|p\|_{L^{9/4,\infty}(\mathbb R^3)}
 +\|p+|v|^2/2\|_{L^{9/4,\infty}(\mathbb R^3)}\leq C.
 \label{eq:endpoint-pressure-shell}
\end{equation}
Here we used
$v_iv_k\in L^{9/4,\infty}$ and
$|v|^2\in L^{9/4,\infty}$, because
$1/(9/4)=1/(9/2)+1/(9/2)$.  Put $s=q_0/2$.  Since
$s<9/4$, the finite-measure embedding on $A_R$ gives
\[
 \|f\|_{L^s(A_R)}
 \leq C|A_R|^{1/s-4/9}
       \|f\|_{L^{9/4,\infty}(A_R)}.
\]
The power of $R$ is
\[
 3\left(\frac1s-\frac49\right)
 =\frac6{q_0}-\frac43.
\]
Consequently,
\begin{equation}
 \|p\|_{L^{q_0/2}(A_R)}+
 \|p+|v|^2/2\|_{L^{q_0/2}(A_R)}
 \leq CR^{-4/3+6/q_0},
 \qquad R>0.
 \label{eq:pressure-shell}
\end{equation}

The two pressure estimates \eqref{eq:endpoint-pressure-shell} and
\eqref{eq:pressure-shell} are preserved by the blow-down.  More precisely,
\begin{equation}
 \sup_j\bigl(
   \|P_j\|_{L^{9/4,\infty}(\mathbb R^3)}
  +\|Q_j\|_{L^{9/4,\infty}(\mathbb R^3)}
 \bigr)\leq C,
 \label{eq:scaled-pressure-endpoint}
\end{equation}
because $4/3-3/(9/4)=0$.  Likewise, changing variables in
\eqref{eq:pressure-shell} yields
\begin{equation}
 \|P_j\|_{L^{q_0/2}(A_\rho)}
 +\|Q_j\|_{L^{q_0/2}(A_\rho)}
 \leq C\rho^{-4/3+6/q_0},
 \qquad \rho>0,
 \label{eq:scaled-pressure-shell}
\end{equation}
with a constant independent of $j$.

\medskip
\noindent\emph{Local compactness.}
The pressure is nonlocal.  We isolate a local Riesz-transform part,
which converges strongly, and treat the complementary part as harmonic.
Choose bounded open sets
$K\Subset U\Subset U'\Subset\mathbb R^3\setminus\{0\}$ and a cutoff
$\chi\in C_c^\infty(U')$ equal to one in a neighborhood of $\overline U$.
Set
\[
 L_j:=\mathcal R_i\mathcal R_k
       (\chi V_{j,i}V_{j,k}),
 \qquad
 H_j:=P_j-L_j.
\]
By \eqref{eq:scaled-canonical-pressure},
\[
 \Delta H_j
 =-\partial_i\partial_k
   \bigl((1-\chi)V_{j,i}V_{j,k}\bigr)=0
 \quad\text{in }U,
\]
because $1-\chi$ vanishes near $\overline U$.  Thus $H_j$ is harmonic in
$U$.

The strong convergence \eqref{eq:strong-V-compactness} gives
\begin{align*}
 &\|\chi(V_j\otimes V_j-V\otimes V)\|_{L^{q_0/2}(\mathbb R^3)}\\
 &\quad\leq
 \bigl(\|V_j\|_{L^{q_0}(\supp\chi)}
       +\|V\|_{L^{q_0}(\supp\chi)}\bigr)
 \|V_j-V\|_{L^{q_0}(\supp\chi)}
 \longrightarrow0.
\end{align*}
Since $q_0/2>1$, Calder\'on--Zygmund boundedness therefore gives
\[
 L_j\longrightarrow
 L:=\mathcal R_i\mathcal R_k(\chi V_iV_k)
 \quad\text{strongly in }L^{q_0/2}(\mathbb R^3).
\]
The global weak bound \eqref{eq:scaled-pressure-endpoint}, the
finite-measure embedding on $U$, and the preceding bound for $L_j$ imply
\[
 \sup_j\|H_j\|_{L^{q_0/2}(U)}\leq C_U.
\]
The interior estimates for harmonic functions yield, for every integer
$m\geq0$,
\[
 \|H_j\|_{C^{m+1}(K)}
 \leq C_{m,K,U}\|H_j\|_{L^{q_0/2}(U)}\leq C_{m,K,U}.
\]
By Arzel\`a--Ascoli, followed by a diagonal argument over both $m$ and a
nested exhaustion of $\mathbb R^3\setminus\{0\}$, we may pass to a
subsequence such that $H_j$ converges in $C^m_{\mathrm{loc}}$ for every
$m\geq0$.  Consequently, there is a function $P$ such that
\[
 P_j\longrightarrow P
 \quad\text{strongly in }L^{q_0/2}_{\mathrm{loc}}
       (\mathbb R^3\setminus\{0\}).
\]
Moreover,
\[
 \||V_j|^2-|V|^2\|_{L^{q_0/2}(K)}
 \leq
 \bigl(\|V_j\|_{L^{q_0}(K)}+\|V\|_{L^{q_0}(K)}\bigr)
 \|V_j-V\|_{L^{q_0}(K)}\longrightarrow0.
\]
Thus, with $Q=P+|V|^2/2$,
\begin{equation}
 P_j\longrightarrow P,
 \qquad
 Q_j\longrightarrow Q
 \quad\text{strongly in }L^{q_0/2}_{\mathrm{loc}}
       (\mathbb R^3\setminus\{0\}).
 \label{eq:strong-pressure-compactness}
\end{equation}
After a further diagonal extraction, both convergences hold almost
everywhere on the punctured space.  Applying the distribution-function
Fatou property to the global bounds
\eqref{eq:scaled-pressure-endpoint}, and assigning arbitrary values at the
origin, gives the stronger global conclusion
\begin{equation}
 P,Q\in L^{9/4,\infty}(\mathbb R^3).
 \label{eq:weak-endpoint-tangent-pressure}
\end{equation}

On compact subsets of $\{|y|>1\}$, we use the local decomposition just
constructed, rather than the whole-space formula that will be justified
below.  Choose
$K\Subset U\Subset U'\Subset\{|y|>1\}$ and
$\chi\in C_c^\infty(U')$ equal to one near $\overline U$.  Then, on $U$,
\[
 P=\mathcal R_i\mathcal R_k(\chi V_iV_k)+H,
\]
where $H$ is harmonic.  By \eqref{eq:endpoint-tangent-bounds},
$\chi V_iV_k\in L^{9/4}$, so the local Riesz term is in $L^{9/4}$;
the harmonic term is smooth on smaller compact sets.  Since also
$|V|^2\in L^{9/4}_{\mathrm{loc}}$, we conclude that
\begin{equation}
 P,Q\in L^{9/4}_{\mathrm{loc}}(\{|y|>1\}).
 \label{eq:endpoint-tangent-pressure}
\end{equation}

\medskip
\noindent\emph{Compatibility with the whole-space normalization.}
Set $T_j=V_j\otimes V_j$.  O'Neil's product inequality and
\eqref{eq:scaled-velocity-uniform} give
\begin{equation}
 \sup_j\|T_j\|_{L^{9/4,\infty}(\mathbb R^3)}
 \leq C\sup_j
       \|V_j\|_{L^{9/2,\infty}(\mathbb R^3)}^2
 \leq C.
 \label{eq:scaled-quadratic-weak-bound}
\end{equation}
With its equivalent dual Banach norm,
$L^{9/4,\infty}=(L^{9/5,1})^*$, and $L^{9/5,1}$ is separable
\cite{BennettSharpley1988}.  Hence a further subsequence satisfies
\[
 T_j\stackrel{*}{\rightharpoonup}T
 \quad\text{in }L^{9/4,\infty}(\mathbb R^3).
\]
On every $K\Subset\mathbb R^3\setminus\{0\}$,
\eqref{eq:strong-V-compactness} gives
\[
 T_j\longrightarrow V\otimes V
 \quad\text{strongly in }L^{q_0/2}(K).
\]
The local strong convergence identifies the weak-star limit on every such
set.  Hence $T=V\otimes V$ almost everywhere on the punctured space and,
after assigning an arbitrary value at the origin, almost everywhere on
$\mathbb R^3$.

Second-order Riesz transforms are bounded on both
$L^{9/4,\infty}$ and its predual $L^{9/5,1}$.  For every
$\varphi\in C_c^\infty(\mathbb R^3\setminus\{0\})$,
\begin{align*}
 \langle P,\varphi\rangle
 &=\lim_{j\to\infty}\langle P_j,\varphi\rangle\\
 &=\lim_{j\to\infty}
   \bigl\langle V_{j,i}V_{j,k},
     \mathcal R_k\mathcal R_i\varphi\bigr\rangle\\
 &=\bigl\langle V_iV_k,
     \mathcal R_k\mathcal R_i\varphi\bigr\rangle\\
 &=\bigl\langle\mathcal R_i\mathcal R_k(V_iV_k),
     \varphi\bigr\rangle.
\end{align*}
The first limit follows from \eqref{eq:strong-pressure-compactness}; the
second follows from the weak-star convergence above and
$\mathcal R_k\mathcal R_i\varphi\in L^{9/5,1}(\mathbb R^3)$.
Consequently, the limit retains the canonical normalization:
\begin{equation}
 P=\mathcal R_i\mathcal R_k(V_iV_k)
 \quad\text{in }\mathcal D'(\mathbb R^3).
 \label{eq:tangent-canonical-pressure}
\end{equation}
Indeed, the preceding pairing first proves equality away from the origin.
The difference of the two sides is represented by an
$L^{9/4,\infty}(\mathbb R^3)$ function that vanishes almost everywhere on
$\mathbb R^3\setminus\{0\}$.  Since $\{0\}$ has measure zero, the same
function vanishes almost everywhere on $\mathbb R^3$ and therefore
represents the zero distribution.  In particular, no harmonic pressure
defect is created at the origin or at infinity by the blow-down limit.

\subsection{The Euler limit and ordinary Bernoulli conservation}

We next pass from the rescaled Navier--Stokes equations to a stationary
Euler system.  After that passage, we derive the ordinary Bernoulli
conservation law directly from the rescaled local-energy identities.  This
order is important: the ordinary law will hold on the entire punctured
space, whereas the stronger companion laws proved later use the selected
strong endpoint bounds and are available only on $\{|y|>1\}$.

\noindent\emph{Passage to the Euler equations.}
In the variables of \eqref{eq:blowdown-vorticity} and
\eqref{eq:scaled-pressure-head}, equation~\eqref{eq:NS} becomes
\begin{equation}
 -\nu_j\Delta V_j+
 \operatorname{div}(V_j\otimes V_j)+\nabla P_j=0,
 \qquad
 \operatorname{div}V_j=0,
 \qquad
 \nu_j=R_j^{-1/3}.
 \label{eq:scaled-NS}
\end{equation}
Fix a bounded open set $K\Subset\mathbb R^3\setminus\{0\}$ and
$\varphi\in C_c^\infty(K;\mathbb R^3)$.  The viscous term satisfies
\[
 \bigl|\nu_j\langle\Delta V_j,\varphi\rangle\bigr|
 =\nu_j\left|\int_K V_j\cdot\Delta\varphi\,dy\right|
 \leq C_\varphi\nu_j\|V_j\|_{L^1(K)}=o(1),
\]
because the local $L^{q_0}$ norm is uniform and $\nu_j\to0$.  The strong
velocity convergence gives the explicit quadratic estimate
\[
 \|V_j\otimes V_j-V\otimes V\|_{L^{q_0/2}(K)}
 \leq
 \bigl(\|V_j\|_{L^{q_0}(K)}+\|V\|_{L^{q_0}(K)}\bigr)
 \|V_j-V\|_{L^{q_0}(K)}\longrightarrow0.
\]
Also $P_j\to P$ strongly in $L^{q_0/2}(K)$ by
\eqref{eq:strong-pressure-compactness}.  Thus the weak formulation of
\eqref{eq:scaled-NS},
\[
 -\nu_j\int_K V_j\cdot\Delta\varphi\,dy
 -\int_K(V_j\otimes V_j):\nabla\varphi\,dy
 -\int_KP_j\diver\varphi\,dy=0,
\]
passes to the limit.  Finally, for every scalar
$\zeta\in C_c^\infty(K)$,
\[
 \int_KV\cdot\nabla\zeta\,dy
 =\lim_{j\to\infty}\int_KV_j\cdot\nabla\zeta\,dy=0,
\]
so the divergence-free condition also passes to the limit.  We obtain
\begin{equation}
 \operatorname{div}(V\otimes V)+\nabla P=0,
 \qquad
 \operatorname{div}V=0
 \quad\text{in }\mathcal D'(\mathbb R^3\setminus\{0\}).
 \label{eq:Euler-limit}
\end{equation}
We call any subsequential limit $(V,P)$ obtained in this way an
\emph{Euler blow-down tangent}.

\medskip
\noindent\emph{Lamb's form on the selected exterior region.}
Let $U\Subset\{|y|>1\}$ be a bounded smooth open set.  By
\eqref{eq:endpoint-tangent-bounds},
$V\in W^{1,9/5}(U)\cap L^{9/2}(U)$.  Hence all three quantities
\[
 (V\cdot\nabla)V,
 \qquad
 \nabla\frac{|V|^2}{2},
 \qquad
 V\times\curl V
\]
belong to $L^{9/7}(U)$, since
\[
 \frac1{9/2}+\frac1{9/5}=\frac29+\frac59=\frac79.
\]
Approximating $V$ by smooth vector fields in the relevant Sobolev and
Lebesgue norms proves
\[
 (V\cdot\nabla)V
 =\nabla\frac{|V|^2}{2}-V\times\curl V
 \quad\text{in }L^{9/7}(U).
\]
Because $\diver V=0$, we also have
$\diver(V\otimes V)=(V\cdot\nabla)V$ in distributions.  Substitution
into \eqref{eq:Euler-limit} therefore gives Lamb's form
\begin{equation}
 \nabla Q=V\times\curl V
 \quad\text{in }\mathcal D'(\{|y|>1\}),
 \label{eq:selected-tangent-Lamb}
\end{equation}
and the quantitative estimate
\[
 \|\nabla Q\|_{L^{9/7}(U)}
 \leq
 \|V\|_{L^{9/2}(U)}
 \|\curl V\|_{L^{9/5}(U)}.
\]
Together with \eqref{eq:endpoint-tangent-bounds} and
\eqref{eq:endpoint-tangent-pressure}, this gives the following exterior
regularity properties:
\begin{equation}
 \nabla V\in L^{9/5}_{\mathrm{loc}},\qquad
 V\in L^{9/2}_{\mathrm{loc}},\qquad
 \nabla Q\in L^{9/7}_{\mathrm{loc}},\qquad
 Q\in L^{9/4}_{\mathrm{loc}}
 \quad\text{on }\{|y|>1\}.
 \label{eq:selected-tangent-strong-regularity}
\end{equation}

\medskip
\noindent\emph{Ordinary Bernoulli conservation on the punctured space.}
Because $V_j$ is smooth and divergence-free,
\[
 V_j\cdot\diver(V_j\otimes V_j)
 =\diver\left(\frac{|V_j|^2}{2}V_j\right),
 \qquad
 V_j\cdot\nabla P_j=\diver(P_jV_j).
\]
Moreover,
\[
 -V_j\cdot\Delta V_j
 =-\Delta\frac{|V_j|^2}{2}+|\nabla V_j|^2.
\]
Taking the scalar product of \eqref{eq:scaled-NS} with $V_j$ therefore
gives the exact local-energy identity
\begin{equation}
 -\nu_j\Delta\frac{|V_j|^2}{2}
 +\nu_j|\nabla V_j|^2
 +\operatorname{div}(Q_jV_j)=0.
 \label{eq:scaled-local-energy}
\end{equation}
We pass to the limit in the three terms separately.  For every bounded
open set $K\Subset\mathbb R^3\setminus\{0\}$, scaling gives exactly
\begin{equation}
 \nu_j\int_K|\nabla V_j|^2\,dy
 =\int_{R_jK}|\nabla v|^2\,dx\longrightarrow0.
 \label{eq:scaled-dissipation-tail}
\end{equation}
Here $R_jK$ escapes to infinity because $K$ is separated from the
origin, and the last limit follows from $\nabla v\in L^2(\mathbb R^3)$.
In particular, for every $\phi\in C_c^\infty(K)$,
\[
 \left|\nu_j\int_K\phi|\nabla V_j|^2\,dy\right|
 \leq\|\phi\|_\infty
      \int_{R_jK}|\nabla v|^2\,dx\longrightarrow0.
\]
The Laplacian term also tends to zero distributionally.  Indeed,
$q_0>2$ and the uniform local $L^{q_0}$ bound imply
\[
 \|V_j\|_{L^2(K)}^2
 \leq |K|^{1-2/q_0}\|V_j\|_{L^{q_0}(K)}^2\leq C_K,
\]
and hence
\[
 \left|\nu_j\int_K\frac{|V_j|^2}{2}\Delta\phi\,dy\right|
 \leq \frac12\nu_j\|\Delta\phi\|_\infty
       \|V_j\|_{L^2(K)}^2\longrightarrow0.
\]
For the current, H\"older's inequality gives
\begin{align*}
 \|Q_jV_j-QV\|_{L^{q_0/3}(K)}
 &\leq
 \|Q_j-Q\|_{L^{q_0/2}(K)}\|V_j\|_{L^{q_0}(K)}\\
 &\quad+
 \|Q\|_{L^{q_0/2}(K)}\|V_j-V\|_{L^{q_0}(K)}
 \longrightarrow0.
\end{align*}
Since $q_0>3$, this proves
\begin{equation}
 Q_jV_j\longrightarrow QV
 \quad\text{strongly in }L^{q_0/3}_{\mathrm{loc}}
       (\mathbb R^3\setminus\{0\}),
 \qquad \frac{q_0}{3}>1.
 \label{eq:energy-current-convergence}
\end{equation}
Passing through all three terms in
\eqref{eq:scaled-local-energy} now yields
\begin{equation}
 \operatorname{div}(QV)=0
 \quad\text{in }\mathcal D'(\mathbb R^3\setminus\{0\}).
 \label{eq:ordinary-energy-law}
\end{equation}

For almost every $r>0$, define
\begin{equation}
 F(r)=\int_{S_r}QV\cdot n\,dS,
 \qquad n(y)=\frac{y}{|y|}.
 \label{eq:Euler-sphere-flux}
\end{equation}
Since $QV\in L^{q_0/3}_{\mathrm{loc}}$ with $q_0/3>1$, it is locally
integrable.  The coarea formula therefore defines $F(r)$ for almost every
$r$, and, for every $0<a<b<\infty$,
\[
 \int_a^b|F(r)|\,dr
 \leq\int_{\{a<|y|<b\}}|QV|\,dy<\infty.
\]
Thus $F\in L^1_{\mathrm{loc}}(0,\infty)$.  Testing
\eqref{eq:ordinary-energy-law} with $\phi(y)=\eta(|y|)$, where
$\eta\in C_c^\infty(0,\infty)$, and then using coarea gives
\[
 0=\langle\diver(QV),\eta(|\cdot|)\rangle
 =-\int_0^\infty \eta'(r)F(r)\,dr.
\]
The distributional derivative of the locally integrable function $F$
therefore vanishes on the connected interval $(0,\infty)$.  Hence
\begin{equation}
 F(r)=F
 \quad\text{for almost every }r>0
 \label{eq:constant-Euler-flux}
\end{equation}
for some constant $F$.  No pointwise normal trace on every sphere is used.

\subsection{Endpoint Sobolev structure and Bernoulli companion laws}
\label{sec:companion}

Let $(V,P)$ be the selected Euler blow-down tangent and set
$Q=P+|V|^2/2$.  We now supplement the ordinary conservation law
$\diver(QV)=0$ by proving the family $\diver(\beta(Q)V)=0$.  The original weak
endpoint bound is not sufficient for this chain rule.  The good-scale
selection supplies exactly the strong local endpoint regularity needed on
the exterior region:
\[
 \nabla V\in L^{9/5}_{\mathrm{loc}},
 \qquad V\in L^{9/2}_{\mathrm{loc}},
 \qquad Q\in L^{9/4}_{\mathrm{loc}},
 \qquad \nabla Q\in L^{9/7}_{\mathrm{loc}}
 \quad\text{on }\{|y|>1\}.
\]
We also use
\[
 \diver V=0,
 \qquad
 \diver(QV)=0
 \quad\text{in }\mathcal D'(\R^3\setminus\{0\}).
\]
The second identity follows directly from the rescaled Navier--Stokes
local-energy identities.

\begin{proposition}[Sobolev Bernoulli companion law]
\label{prop:companion-law}
For every $\beta\in C^1(\R)$ with $\|\beta'\|_\infty<\infty$, the Euler tangent
satisfies
\begin{equation}
 \diver\bigl(\beta(Q)V\bigr)=0
 \quad\text{in }\mathcal D'(\{|y|>1\}).
 \label{eq:companion-law}
\end{equation}
\end{proposition}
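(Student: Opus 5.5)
We prove \eqref{eq:companion-law} locally by a product rule in Sobolev spaces, using the strong exterior regularity \eqref{eq:selected-tangent-strong-regularity} and the Lamb form \eqref{eq:selected-tangent-Lamb}. Fix a bounded smooth open set $U\Subset\{|y|>1\}$ and a test function $\phi\in C_c^\infty(U)$. The goal is to show
\[
 \int_U\beta(Q)\,V\cdot\nabla\phi\,dy=0.
\]

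\emph{First step: the chain rule for $\beta(Q)$.} Since $Q\in L^{9/4}(U)$ and $\nabla Q\in L^{9/7}(U)$, we have $Q\in W^{1,9/7}(U)$. Because $|\beta(Q)|\le|\beta(0)|+\|\beta'\|_\infty|Q|$, also $\beta(Q)\in L^{9/4}(U)$. The standard chain rule for Lipschitz $C^1$ functions of Sobolev functions gives $\beta(Q)\in W^{1,9/7}(U)$ with
\[
 \nabla\beta(Q)=\beta'(Q)\nabla Q=\beta'(Q)\,V\times\curl V .
\]

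\emph{Second step: the product rule.} We show $\diver(\beta(Q)V)=\beta(Q)\diver V+V\cdot\nabla\beta(Q)$ in $\mathcal D'(U)$. The exponents are dual in the right way. The pairing $\beta(Q)\nabla V$ lies in $L^1$ because $4/9+5/9=1$. The pairing $V\cdot\nabla\beta(Q)$ lies in $L^1$ because $2/9+7/9=1$. The pairing $\beta(Q)V$ lies in $L^{9/6}\subset L^1$.

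To justify the identity, mollify $Q$ and $V$ on a slightly smaller set, obtaining $Q_\varepsilon\to Q$ in $W^{1,9/7}\cap L^{9/4}$ and $V_\varepsilon\to V$ in $W^{1,9/5}\cap L^{9/2}$. Then $\beta(Q_\varepsilon)\to\beta(Q)$ in $L^{9/4}$ by the Lipschitz bound. We also need $\beta'(Q_\varepsilon)\nabla Q_\varepsilon\to\beta'(Q)\nabla Q$ in $L^{9/7}$, which follows from a.e. convergence along a subsequence, boundedness of $\beta'$, continuity of $\beta'$, and dominated convergence (a Vitali argument for the $\nabla Q_\varepsilon$ factor). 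The identity holds for the smooth approximants, and each term converges in $L^1_{\mathrm{loc}}$ by the Hölder pairings above. This justifies the product rule.

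\emph{Third step: conclusion.} Using $\diver V=0$ and the Lamb form, the right-hand side becomes
\[
 \beta'(Q)\,V\cdot(V\times\curl V)=0
\]
pointwise almost everywhere. Hence $\diver(\beta(Q)V)=0$ on $U$. Since $U$ is arbitrary, the identity holds on $\{|y|>1\}$. Note that the ordinary law $\diver(QV)=0$ is the case $\beta(s)=s$ and is consistent with this.

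The main obstacle is the rigorous passage to the limit in the chain and product rules at exactly the endpoint exponents, where the Hölder pairings are only $L^1$ with no room to spare. In particular, one must make sure the identity $\nabla Q=V\times\curl V$ is used only after the limit, as an $L^{9/7}$ identity, and not for the mollified fields. I would handle this by proving the product rule abstractly for $f\in W^{1,9/7}\cap L^{9/4}$ and $V\in W^{1,9/5}\cap L^{9/2}$ by density. I would then apply it with $f=\beta(Q)$, whose membership in that class comes from the first step. This avoids commutator estimates entirely, since the exponents $(9/4,9/5)$ and $(9/2,9/7)$ are exactly dual.
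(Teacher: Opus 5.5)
Your proposal is correct and follows essentially the same route as the paper. The chain rule places $\beta(Q)$ in $W^{1,9/7}(U)\cap L^{9/4}(U)$, the exactly dual pairings $(9/4,9/5)$ and $(9/2,9/7)$ justify $\diver(\beta(Q)V)=\beta'(Q)V\cdot\nabla Q$, and the almost-everywhere Lamb identity makes this vanish; your explicit mollification argument simply fills in the step the paper cites as the standard Sobolev product rule.
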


\begin{proof}
Fix a bounded smooth open set $U\Subset\{|y|>1\}$.  By
\eqref{eq:selected-tangent-strong-regularity},
\[
 Q\in W^{1,9/7}(U)\cap L^{9/4}(U),
 \qquad V\in W^{1,9/5}(U)\cap L^{9/2}(U).
\]
The standard Sobolev chain rule
\cite{BahouriCheminDanchin2011} gives
$\beta(Q)\in W^{1,9/7}(U)$ and
\[
 \nabla\beta(Q)=\beta'(Q)\nabla Q
 \quad\text{a.e. on }U.
\]
Since
\[
 |\beta(z)|\leq |\beta(0)|+\|\beta'\|_\infty|z|
\]
and $|U|<\infty$, we also have $\beta(Q)\in L^{9/4}(U)$.  The two terms
in the weak derivative of $\beta(Q)V$ are integrable.  Indeed,
\begin{align*}
 \|\beta'(Q)V\otimes\nabla Q\|_{L^1(U)}
 &\leq \|\beta'\|_\infty
       \|V\|_{L^{9/2}(U)}\|\nabla Q\|_{L^{9/7}(U)},\\
 \|\beta(Q)\nabla V\|_{L^1(U)}
 &\leq \|\beta(Q)\|_{L^{9/4}(U)}
       \|\nabla V\|_{L^{9/5}(U)},
\end{align*}
because
\[
 \frac79+\frac29=1,
 \qquad
 \frac49+\frac59=1.
\]
The product itself belongs to $L^{3/2}(U)\subset L^1(U)$, because
$1/(9/4)+1/(9/2)=2/3$.  Thus
$\beta(Q)V\in W^{1,1}(U)$, and the Sobolev product rule gives
\[
 \nabla\bigl(\beta(Q)V\bigr)
 =\beta'(Q)V\otimes\nabla Q+\beta(Q)\nabla V
 \quad\text{in }\mathcal D'(U).
\]
Taking the trace and using $\diver V=0$, we obtain
\[
 \diver\bigl(\beta(Q)V\bigr)
 =\beta'(Q)V\cdot\nabla Q+\beta(Q)\diver V
 =\beta'(Q)V\cdot\nabla Q.
\]
Both sides belong to $L^1(U)$.  Lamb's identity
\eqref{eq:selected-tangent-Lamb} therefore holds almost everywhere, and
\[
 \beta'(Q)V\cdot\nabla Q
 =\beta'(Q)V\cdot\bigl(V\times\curl V\bigr)=0
 \quad\text{a.e. on }U.
\]
Since $U\Subset\{|y|>1\}$ is arbitrary,
\eqref{eq:companion-law} follows.
\end{proof}

\subsection{Vanishing of the tangent Bernoulli flux}
\label{sec:zero-tangent-flux}

We now combine the companion laws with the global weak critical bounds.
The companion law first makes each renormalized flux constant in the
radius; the annular estimates then force that constant to be zero at
large radii.  Finally we remove the renormalization.

Fix $R>0$.  The scaled forms of
\eqref{eq:velocity-critical-Lorentz} and
\eqref{eq:endpoint-pressure-shell} hold on $A_R$ for every $j$, with the
same constant independent of $R$ and $j$, because both weak norms are
invariant under the chosen dilation.  More explicitly,
\[
 \|V_j\|_{L^{9/2,\infty}(A_R)}\leq C,
 \qquad
 \|Q_j\|_{L^{9/4,\infty}(A_R)}\leq C.
\]
After the almost-everywhere subsequential convergence already obtained,
the Lorentz Fatou property passes these bounds to $V$ and $Q$ on the same
fixed annulus.  O'Neil's product inequality \cite{ONeil1963}, with
\[
 \frac1{3/2}=\frac1{9/4}+\frac1{9/2},
\]
then gives
\begin{equation}
 \|V\|_{L^{9/2,\infty}(A_R)}
 +\|Q\|_{L^{9/4,\infty}(A_R)}
 +\|QV\|_{L^{3/2,\infty}(A_R)}\leq C.
 \label{eq:weak-critical-tangent-bounds}
\end{equation}
Because $R>0$ was arbitrary and the inherited constant is scale
independent, the estimate is uniform over all multiplicative annuli.

The following truncation has three useful properties: it is globally
Lipschitz, it is cubic near the origin, and it converges pointwise to the
identity as $\tau\downarrow0$.  The cubic behavior controls the region where
$Q$ is small, while the linear bound is compatible with the inherited weak
critical norms.  For $\tau>0$, define
\begin{equation}
 \Psi_\tau(z)=z\bigl(1-e^{-z^2/\tau}\bigr).
 \label{eq:Psi-tau}
\end{equation}
Then $\Psi_\tau\in C^1(\R)$ is globally Lipschitz,
$\Psi_\tau(0)=0$, and
\begin{equation}
 |\Psi_\tau(z)|\leq |z|,
 \qquad
 |\Psi_\tau(z)|\leq \tau^{-1}|z|^3.
 \label{eq:Psi-bounds}
\end{equation}
Indeed,
\[
 \Psi_\tau'(z)
 =1-e^{-z^2/\tau}
  +2\frac{z^2}{\tau}e^{-z^2/\tau},
 \qquad
 \|\Psi_\tau'\|_\infty\leq1+\frac2e,
\]
while $1-e^{-s}\leq\min\{1,s\}$ for $s\geq0$ gives both inequalities in
\eqref{eq:Psi-bounds}.
By \Cref{prop:companion-law},
\begin{equation}
 \diver\bigl(\Psi_\tau(Q)V\bigr)=0
 \quad\text{in }\mathcal D'(\{|y|>1\}).
 \label{eq:Psi-current-divfree}
\end{equation}
The bound $|\Psi_\tau(Q)V|\leq|QV|$ and
\eqref{eq:weak-critical-tangent-bounds} imply that this current is locally
integrable on $\{|y|>1\}$.  The coarea formula therefore defines
\begin{equation}
 \mathcal F_\tau(r)
 :=\int_{S_r}\Psi_\tau(Q)V\cdot n\,dS
 \label{eq:renormalized-flux}
\end{equation}
for almost every $r>1$.  Moreover, for $1<a<b<\infty$,
\[
 \int_a^b|\mathcal F_\tau(r)|\,dr
 \leq\int_{\{a<|y|<b\}}|\Psi_\tau(Q)V|\,dy<\infty.
\]
For every $\eta\in C_c^\infty(1,\infty)$,
\[
 0=\langle\diver(\Psi_\tau(Q)V),\eta(|\cdot|)\rangle
 =-\int_1^\infty\eta'(r)\mathcal F_\tau(r)\,dr.
\]
Thus the distributional derivative of $\mathcal F_\tau$ vanishes on the
connected interval $(1,\infty)$, so $\mathcal F_\tau(r)$ is equal to a
constant almost everywhere on this interval.  We denote the constant by
$\mathcal F_\tau$.

\begin{proposition}[Vanishing of the renormalized and ordinary fluxes]
\label{prop:zero-flux}
For every $\tau>0$,
\begin{equation}
 \mathcal F_\tau=0.
 \label{eq:zero-renormalized-flux}
\end{equation}
Moreover,
\begin{equation}
 \int_{S_r}QV\cdot n\,dS=0
 \quad\text{for almost every }r>0.
 \label{eq:zero-ordinary-flux}
\end{equation}
\end{proposition}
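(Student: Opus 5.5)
The plan is to exploit the fact that the inherited weak bounds on $Q$ and $V$ are \emph{global} on $\R^3$, not merely uniform over annuli. Large values of $Q$ can therefore occupy only a set of bounded total measure, however large the annulus. The cubic behavior of $\Psi_\tau$ near $0$ handles the small values.

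\emph{Global weak bounds.} First I record that $Q\in L^{9/4,\infty}(\R^3)$ by \eqref{eq:weak-endpoint-tangent-pressure}. Next, $V\in L^{9/2,\infty}(\R^3)$, by the Fatou argument of \Cref{lem:weak-Lorentz-limit} applied on an exhaustion to the uniform bound \eqref{eq:scaled-velocity-uniform}. O'Neil's inequality then gives $QV\in L^{3/2,\infty}(\R^3)$ with a constant $C$.

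\emph{Averaging the constant flux.} Fix $\tau>0$ and $R\geq2$. Since $\mathcal F_\tau(r)=\mathcal F_\tau$ for a.e.\ $r>1$, the coarea formula gives
\[
 R\,|\mathcal F_\tau|=\Bigl|\int_R^{2R}\mathcal F_\tau(r)\,dr\Bigr|
 \leq\int_{A_R}|\Psi_\tau(Q)|\,|V|\,dy .
\]
For a level $\lambda>0$, I split $A_R$ into $E_\lambda=A_R\cap\{|Q|>\lambda\}$ and its complement.

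On $E_\lambda$, I use $|\Psi_\tau(Q)|\leq|Q|$ from \eqref{eq:Psi-bounds}. The global weak bound gives $|E_\lambda|\leq C\lambda^{-9/4}$, independently of $R$. The finite-measure embedding $L^{3/2,\infty}(E)\hookrightarrow L^1(E)$ with constant $C|E|^{1/3}$ then yields
\[
 \int_{E_\lambda}|QV|\,dy\leq C|E_\lambda|^{1/3}\|QV\|_{L^{3/2,\infty}}\leq C\lambda^{-3/4}.
\]
On the complement, $|\Psi_\tau(Q)|\leq\tau^{-1}|Q|^3\leq\tau^{-1}\lambda^2|Q|$. Using \eqref{eq:weak-critical-tangent-bounds},
\[
 \int_{A_R}|QV|\,dy\leq C|A_R|^{1/3}\leq CR,
\]
so this part is at most $C\lambda^2R/\tau$.

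Combining the two pieces gives $|\mathcal F_\tau|\leq C\lambda^{-3/4}R^{-1}+C\lambda^2\tau^{-1}$. Choosing $\lambda=R^{-1/2}$ gives
\[
 |\mathcal F_\tau|\leq CR^{-5/8}+C\tau^{-1}R^{-1}\to0 \quad\text{as } R\to\infty,
\]
which proves \eqref{eq:zero-renormalized-flux}.

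\emph{Removing the renormalization.} As $\tau\downarrow0$, $\Psi_\tau(Q)V\to QV$ pointwise wherever $Q\neq0$, and both sides vanish where $Q=0$. The bound $|\Psi_\tau(Q)V|\leq|QV|$ together with $QV\in L^1(\{a<|y|<b\})$ allows dominated convergence. Hence, for $1<a<b$,
\[
 \int_a^bF(r)\,dr=\lim_{\tau\downarrow0}\int_a^b\mathcal F_\tau(r)\,dr=0 .
\]
So the constant $F$ in \eqref{eq:constant-Euler-flux} vanishes. Since $F(r)=F$ for a.e.\ $r>0$, this gives \eqref{eq:zero-ordinary-flux} on all of $(0,\infty)$, even though the companion law is only available on $\{|y|>1\}$.

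\emph{Main obstacle.} The delicate point is the splitting step. It needs the $R$-independent measure bound on $\{|Q|>\lambda\}$, which comes from the global weak-Lorentz norm rather than the annular ones. It also needs the correct balance between $\lambda^{-3/4}/R$ and $\lambda^2/\tau$. If only annular weak bounds were available, $|E_\lambda|$ would be of order $\min\{R^3,\lambda^{-9/4}\}$ locally, and the argument would yield only $|\mathcal F_\tau|\leq C$. So I would carefully verify that the Fatou passage indeed produces the global bounds for $V$ and $QV$.
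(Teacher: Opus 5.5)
Your proof is correct and follows essentially the paper's argument. You average the constant flux radially over $A_R$, split at a level $\lambda$ of $|Q|$, use the cubic bound $|\Psi_\tau(Q)|\leq\tau^{-1}|Q|^3$ below the level, and use the weak-$L^{9/4}$ measure bound together with the weak-$L^{3/2}$ integral bound above it. The paper takes $\lambda=R^{-1}$, which in your general estimate $C\lambda^{-3/4}R^{-1}+C\tau^{-1}\lambda^2$ gives its $CR^{-1/4}+C\tau^{-1}R^{-2}$.

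Your removal of the renormalization is a slightly cleaner variant. You apply dominated convergence to the annular integrals $\int_{\{a<|y|<b\}}\Psi_\tau(Q)V\cdot n\,dy$, whereas the paper applies it sphere by sphere on a common full-measure set of radii.

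One correction concerns your closing remark. The uniform annular bound $\|Q\|_{L^{9/4,\infty}(A_R)}\leq C$ already gives $|A_R\cap\{|Q|>\lambda\}|\leq C\lambda^{-9/4}$ independently of $R$, so the global bounds are not needed; the paper uses only the annular ones \eqref{eq:weak-critical-tangent-bounds}. Your claim that annular bounds would only yield $|\mathcal F_\tau|\leq C$ is therefore mistaken, though it does not affect your proof.
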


\begin{proof}
Since $\mathcal F_\tau(r)$ is constant for almost every $r>1$, radial
averaging over $A_R$, with $R>1$, and the coarea formula give
\[
 R|\mathcal F_\tau|
 =\left|\int_R^{2R}\mathcal F_\tau(r)\,dr\right|
 \leq\int_{A_R}|\Psi_\tau(Q)||V|\,dy.
\]
Consequently, \eqref{eq:Psi-bounds} gives
\begin{equation}
 |\mathcal F_\tau|
 \leq \frac1R\int_{A_R}|\Psi_\tau(Q)||V|\,dy.
 \label{eq:flux-radial-average}
\end{equation}
Split the last integral into $\{|Q|\leq R^{-1}\}$ and
$E_R=\{|Q|>R^{-1}\}\cap A_R$.

\smallskip
\noindent\emph{Small values of $Q$.}
On $\{|Q|\leq R^{-1}\}$,
\[
 |\Psi_\tau(Q)||V|
 \leq\tau^{-1}|Q|^3|V|
 =\tau^{-1}|Q|^2|QV|
 \leq\tau^{-1}R^{-2}|QV|.
\]
Combining this pointwise estimate with the weak $L^{3/2}$ bound in
\eqref{eq:weak-critical-tangent-bounds} implies
\begin{align}
 \frac1R\int_{A_R\cap\{|Q|\leq R^{-1}\}}
 |\Psi_\tau(Q)||V|\,dy
 &\leq \frac{1}{\tau R^3}
       \int_{A_R}|QV|\,dy
 \notag\\
 &\leq C\tau^{-1}R^{-2}.
 \label{eq:small-Q-bound}
\end{align}
In the last step we used the standard finite-measure weak estimate
\[
 \int_E|f|\,dy
 \leq C_p|E|^{1-1/p}\|f\|_{L^{p,\infty}(E)},
 \qquad p>1,
\]
with $p=3/2$ and $E=A_R$.  Since $|A_R|\simeq R^3$, this gives
\[
 \int_{A_R}|QV|\,dy
 \leq C|A_R|^{1/3}\|QV\|_{L^{3/2,\infty}(A_R)}
 \leq CR.
\]

\smallskip
\noindent\emph{Large values of $Q$.}
On the large-$|Q|$ region, the weak-$L^{9/4}$ distribution estimate and
the weak-$L^{3/2}$ integral estimate give
\begin{equation}
 |E_R|\leq
 \bigl(R\|Q\|_{L^{9/4,\infty}(A_R)}\bigr)^{9/4}
 \leq CR^{9/4},
 \qquad
 \int_{E_R}|QV|\,dy\leq C|E_R|^{1/3}.
 \label{eq:large-Q-measure}
\end{equation}
Indeed, the weak $L^{9/4}$ distribution bound at level $R^{-1}$ gives
\[
 |E_R|
 \leq\left(
   \frac{\|Q\|_{L^{9/4,\infty}(A_R)}}{R^{-1}}
 \right)^{9/4}
 \leq CR^{9/4},
\]
and the preceding finite-measure weak estimate, now applied to $QV$ on
$E_R$, gives $\int_{E_R}|QV|\leq C|E_R|^{1/3}$.
Consequently,
\begin{align}
 \frac1R\int_{E_R}|\Psi_\tau(Q)||V|\,dy
 &\leq \frac1R\int_{E_R}|QV|\,dy
 \notag\\
 &\leq CR^{-1/4}.
 \label{eq:large-Q-bound}
\end{align}
For each fixed $\tau>0$, first let $R\to\infty$ in
\eqref{eq:flux-radial-average}, using
\eqref{eq:small-Q-bound} and \eqref{eq:large-Q-bound}.  Both terms tend
to zero, and
\eqref{eq:zero-renormalized-flux} follows.  Here $\tau$ is fixed during
the limit.

\smallskip
\noindent\emph{Removing the renormalization.}
We now pass from the renormalized currents to the ordinary current while
keeping a common full-measure set of radii.  Choose a countable sequence
$\tau_k\downarrow0$.  By coarea, there is a full-measure set
$E_0\subset(1,\infty)$ such that $QV\in L^1(S_r)$ for every
$r\in E_0$.  For each $k$, let $E_k$ be the full-measure set on which
$\mathcal F_{\tau_k}(r)=0$.  Since $Q$ is finite almost everywhere and
$\Psi_{\tau_k}(Q)\to Q$ wherever $Q$ is finite, the coarea formula gives a
full-measure set $E_*\subset(1,\infty)$ such that this convergence holds
almost everywhere on $S_r$ for every $r\in E_*$.  The set
\[
 E=E_0\cap E_*\cap\bigcap_{k=1}^\infty E_k
\]
has full measure in $(1,\infty)$, and all statements below hold
simultaneously.  For every $r\in E$,
$\Psi_{\tau_k}(Q)\to Q$ almost everywhere on $S_r$ and
\[
 |\Psi_{\tau_k}(Q)V|\leq|QV|\in L^1(S_r).
\]
Spherewise dominated convergence therefore yields
\[
 \int_{S_r}QV\cdot n\,dS
 =\lim_{k\to\infty}
 \int_{S_r}\Psi_{\tau_k}(Q)V\cdot n\,dS=0.
\]
This gives zero ordinary flux for almost every $r>1$.  The law
\eqref{eq:ordinary-energy-law} shows that the ordinary flux is equal to a
constant $F$ for almost every $r>0$.  Since it vanishes on
the full-measure set $E\subset(1,\infty)$, that constant is $F=0$, and
\eqref{eq:zero-ordinary-flux} follows.
\end{proof}

\subsection{A harmonic cutoff identity and conclusion}
\label{sec:harmonic-conclusion}

The preceding argument shows that the constant tangent Bernoulli flux
vanishes.  We now transfer this information back to the original
Navier--Stokes flow.  The argument has three steps.  First we establish
scale-invariant annular bounds for the physical and rescaled energy
currents.  Next we use these bounds to justify a noncompact harmonic
cutoff identity.  Finally we pass this identity along the selected scales
$R_j$ and insert the zero tangent flux.

Set $\mathcal Q=p+|v|^2/2$.
The symbol $Q$ remains reserved for the tangent Bernoulli function; the
calligraphic symbol $\mathcal Q$ denotes the Bernoulli function of the
original Navier--Stokes solution.  The scalar product of \eqref{eq:NS}
with $v$ gives
\begin{equation}
 -\Delta\frac{|v|^2}{2}+|\nabla v|^2
 +\operatorname{div}(\mathcal Qv)=0.
 \label{eq:physical-local-energy}
\end{equation}

\medskip
\noindent\emph{Annular bounds for the energy currents.}
Before using a noncompact test function, we verify the required
integrability at infinity.  By H\"older's inequality,
\eqref{eq:velocity-strong-shell}, and \eqref{eq:pressure-shell},
\begin{align}
 \int_{A_L}|\mathcal Qv|\,dx
 &\leq
 \|\mathcal Q\|_{L^{q_0/2}(A_L)}
 \|v\|_{L^{q_0}(A_L)}
 |A_L|^{1-3/q_0}\notag\\
 &\leq C
 L^{-4/3+6/q_0}
 L^{-2/3+3/q_0}
 L^{3-9/q_0}
 =CL.
 \label{eq:physical-current-shell}
\end{align}
The factor $|A_L|^{1-3/q_0}$ is present because
$1/(q_0/2)+1/q_0=3/q_0<1$.  Consequently,
\begin{align}
 R\int_{|x|>R}\frac{|\mathcal Qv|}{|x|^2}\,dx
 &\leq R\sum_{k=0}^\infty
 (2^kR)^{-2}
 \int_{A_{2^kR}}|\mathcal Qv|\,dx\notag\\
 &\leq C\sum_{k=0}^\infty2^{-k}<\infty.
 \label{eq:physical-current-absolute}
\end{align}
Thus the physical current integral used below is absolutely convergent.

The same calculation is uniform for the blow-down sequence.  The scaled
forms of \eqref{eq:velocity-strong-shell} and
\eqref{eq:pressure-shell}, with the same exponent $q_0>3$, give, for every
$\rho\geq1$ and every $j$,
\begin{align}
 \|V_j\|_{L^{q_0}(A_\rho)}
 &\leq C\rho^{-2/3+3/q_0},
 \label{eq:scaled-V-shell}\\
 \|Q_j\|_{L^{q_0/2}(A_\rho)}
 &\leq C\rho^{-4/3+6/q_0}.
 \label{eq:scaled-Q-shell}
\end{align}
Here $Q_j(y)=R_j^{4/3}\mathcal Q(R_jy)$ by
\eqref{eq:scaled-pressure-head}.
H\"older's inequality then gives
\begin{align}
 \int_{A_\rho}|Q_jV_j|\,dy
 &\leq
 \|Q_j\|_{L^{q_0/2}(A_\rho)}
 \|V_j\|_{L^{q_0}(A_\rho)}
 |A_\rho|^{1-3/q_0}
 \leq C\rho.
 \label{eq:scaled-current-shell}
\end{align}
The exponents cancel explicitly:
\[
 \left(-\frac43+\frac6{q_0}\right)
 +\left(-\frac23+\frac3{q_0}\right)
 +3\left(1-\frac3{q_0}\right)=1.
\]
Dyadic summation now gives the uniform Abel-tail estimate
\begin{equation}
 \sup_j\int_{|y|>L}\frac{|Q_jV_j|}{|y|^2}\,dy
 \leq C\sum_{k=0}^{\infty}\frac1{2^kL}
 \leq \frac{C}{L},
 \qquad L\geq1.
 \label{eq:uniform-Abel-tail}
\end{equation}
Indeed, the contribution of $A_{2^kL}$ is bounded by
$(2^kL)^{-2}C(2^kL)=C(2^kL)^{-1}$.
This estimate is the step that permits passage to the noncompact integral;
local convergence alone would not suffice.

\medskip
\noindent\emph{The harmonic cutoff identity.}
For $R>0$, introduce
\begin{equation}
 \Phi_R(x)=\min\left\{1,\frac{R}{|x|}\right\}.
 \label{eq:harmonic-cutoff}
\end{equation}
We set $\Phi_R(0)=1$.
This function is continuous, equals one on $B_R$, and is harmonic on
$\mathbb R^3\setminus\overline{B_R}$.  Its radial derivative jumps from
$0$ inside $S_R$ to $-1/R$ outside $S_R$.  Hence, in distributions,
\begin{equation}
 \nabla\Phi_R(x)=-\frac{Rx}{|x|^3}\mathbf1_{\{|x|>R\}},
 \qquad
 \Delta\Phi_R=-\frac1R\,\mathcal H^2\!\restriction_{S_R}.
 \label{eq:harmonic-cutoff-distributions}
\end{equation}

We now justify its use as a test function.  Choose
$\chi\in C_c^\infty(B_2)$ with $0\leq\chi\leq1$ and $\chi=1$ on $B_1$,
and put
$\chi_L(x)=\chi(x/L)$, where $L>2R$.  First smooth $\Phi_R$ in a thin
neighborhood of $S_R$, denote the result by $\Phi_{R,\varepsilon}$, and
test \eqref{eq:physical-local-energy} with the compactly supported
function $\Phi_{R,\varepsilon}\chi_L$.  Letting
$\varepsilon\downarrow0$ and using
\eqref{eq:harmonic-cutoff-distributions} gives
\begin{align}
 &\int_{\mathbb R^3}\Phi_R\chi_L|\nabla v|^2\,dx
 +\frac1{2R}\int_{S_R}|v|^2\,dS
 +R\int_{|x|>R}\chi_L
   \frac{\mathcal Qv\cdot x}{|x|^3}\,dx\notag\\
 &\qquad=
 \frac12\int_{\mathbb R^3}|v|^2
 \bigl(2\nabla\Phi_R\cdot\nabla\chi_L
       +\Phi_R\Delta\chi_L\bigr)\,dx
 +\int_{\mathbb R^3}
   \Phi_R\mathcal Qv\cdot\nabla\chi_L\,dx.
 \label{eq:truncated-harmonic-identity}
\end{align}
All terms on the right are supported in $A_L$.  On this annulus,
\[
 \Phi_R\leq C\frac RL,
 \qquad
 |\nabla\Phi_R|\leq C\frac{R}{L^2},
 \qquad
 |\nabla\chi_L|\leq\frac CL,
 \qquad
 |\Delta\chi_L|\leq\frac C{L^2}.
\]
It follows from \eqref{eq:velocity-L2-shell} and
\eqref{eq:physical-current-shell} that
\begin{align*}
 &\left|
 \frac12\int |v|^2
 \bigl(2\nabla\Phi_R\cdot\nabla\chi_L
       +\Phi_R\Delta\chi_L\bigr)\,dx
 \right|
 \leq C\frac{R}{L^3}\int_{A_L}|v|^2\,dx
 \leq CRL^{-4/3},\\
 &\left|
 \int\Phi_R\mathcal Qv\cdot\nabla\chi_L\,dx
 \right|
 \leq C\frac{R}{L^2}\int_{A_L}|\mathcal Qv|\,dx
 \leq CRL^{-1}.
\end{align*}
Both errors tend to zero as $L\to\infty$.  Since
$|\nabla v|^2\in L^1(\mathbb R^3)$, dominated convergence gives
\[
 \int\Phi_R\chi_L|\nabla v|^2\,dx
 \longrightarrow
 \int\Phi_R|\nabla v|^2\,dx.
\]
The absolute convergence in \eqref{eq:physical-current-absolute} permits
the same passage in the current term.  Letting $L\to\infty$ in
\eqref{eq:truncated-harmonic-identity} proves
\begin{equation}
 \int_{\mathbb R^3}\Phi_R|\nabla v|^2\,dx
 +\frac1{2R}\int_{S_R}|v|^2\,dS
 =-R\int_{|x|>R}
   \frac{\mathcal Q(x)v(x)\cdot x}{|x|^3}\,dx.
 \label{eq:exact-harmonic-identity}
\end{equation}
The favorable sign of the sphere term is a direct consequence of the
negative surface measure in
\eqref{eq:harmonic-cutoff-distributions}.

\medskip
\noindent\emph{Passage to the tangent current.}
At $R=R_j$, \eqref{eq:exact-harmonic-identity} becomes
\begin{equation}
 \int_{\mathbb R^3}\Phi_{R_j}|\nabla v|^2\,dx
 +\frac1{2R_j}\int_{S_{R_j}}|v|^2\,dS
 =-\int_{|y|>1}\frac{Q_j(y)V_j(y)\cdot n(y)}{|y|^2}\,dy.
 \label{eq:scaled-harmonic-identity}
\end{equation}
To verify the scaling explicitly, set $x=R_jy$.  Then
\[
 v(R_jy)=R_j^{-2/3}V_j(y),
 \qquad
 \mathcal Q(R_jy)=R_j^{-4/3}Q_j(y),
\]
and therefore
\begin{align*}
 &R_j\int_{|x|>R_j}
 \frac{\mathcal Q(x)v(x)\cdot x}{|x|^3}\,dx\\
 &\quad=R_j\int_{|y|>1}
 \bigl(R_j^{-4/3}Q_j(y)\bigr)
 \bigl(R_j^{-2/3}V_j(y)\bigr)\cdot
 \left(R_j^{-2}\frac{y}{|y|^3}\right)R_j^3\,dy\\
 &\quad=\int_{|y|>1}
 \frac{Q_j(y)V_j(y)\cdot n(y)}{|y|^2}\,dy.
\end{align*}
Thus the total power is
$R_j^{1-4/3-2/3-2+3}=R_j^0$, as required.

After passing to a further subsequence if necessary,
$Q_jV_j\to QV$ almost everywhere on
$\mathbb R^3\setminus\{0\}$.  Fatou's lemma and
\eqref{eq:uniform-Abel-tail} give
\begin{equation}
 \int_{|y|>L}\frac{|QV|}{|y|^2}\,dy
 \leq\liminf_{j\to\infty}
 \int_{|y|>L}\frac{|Q_jV_j|}{|y|^2}\,dy
 \leq\frac CL,
 \qquad L\geq1.
 \label{eq:limit-Abel-tail}
\end{equation}
For a fixed $L>1$, the weight $|y|^{-2}$ is bounded on
$\{1<|y|<L\}$.  Since \eqref{eq:energy-current-convergence} implies
strong $L^1$ convergence on this bounded annulus,
\[
 \int_{1<|y|<L}
 \frac{|Q_jV_j-QV|}{|y|^2}\,dy\longrightarrow0.
\]
Consequently,
\begin{align*}
 &\left|
 \int_{|y|>1}\frac{Q_jV_j\cdot n}{|y|^2}\,dy
 -\int_{|y|>1}\frac{QV\cdot n}{|y|^2}\,dy
 \right|\\
 &\quad\leq
 \int_{1<|y|<L}\frac{|Q_jV_j-QV|}{|y|^2}\,dy
 +\int_{|y|>L}
   \frac{|Q_jV_j|+|QV|}{|y|^2}\,dy.
\end{align*}
First letting $j\to\infty$ and then $L\to\infty$ proves
\begin{equation}
 \lim_{j\to\infty}
 \int_{|y|>1}\frac{Q_jV_j\cdot n}{|y|^2}\,dy
 =\int_{|y|>1}\frac{QV\cdot n}{|y|^2}\,dy.
 \label{eq:Abel-passage}
\end{equation}
The preceding tail estimates also show that the limit integral is
absolutely convergent.  The coarea formula and
\eqref{eq:constant-Euler-flux} therefore give
\begin{equation}
 \int_{|y|>1}\frac{QV\cdot n}{|y|^2}\,dy
 =\int_1^\infty\frac1{r^2}
   \left(\int_{S_r}QV\cdot n\,dS\right)dr
 =F\int_1^\infty\frac{dr}{r^2}=F.
 \label{eq:Abel-flux-value}
\end{equation}

By \Cref{prop:zero-flux}, $F=0$.  Hence
\eqref{eq:scaled-harmonic-identity} and \eqref{eq:Abel-passage} imply
\[
 \int_{\mathbb R^3}\Phi_{R_j}|\nabla v|^2\,dx
 +\frac1{2R_j}\int_{S_{R_j}}|v|^2\,dS
 \longrightarrow0.
\]
Since $R_j\to\infty$,
\[
 0\leq\Phi_{R_j}(x)\leq1,
 \qquad
 \Phi_{R_j}(x)\longrightarrow1
 \quad\text{for every }x\in\mathbb R^3.
\]
Because $|\nabla v|^2\in L^1(\mathbb R^3)$, dominated convergence gives
\begin{equation}
 \int_{\mathbb R^3}\Phi_{R_j}|\nabla v|^2\,dx
 \longrightarrow
 \int_{\mathbb R^3}|\nabla v|^2\,dx
 \label{eq:dissipation-cutoff-limit}
\end{equation}
Both terms on the left of \eqref{eq:scaled-harmonic-identity} are
nonnegative.  In particular,
\[
 0\leq
 \int_{\mathbb R^3}\Phi_{R_j}|\nabla v|^2\,dx
 \leq
 \int_{\mathbb R^3}\Phi_{R_j}|\nabla v|^2\,dx
 +\frac1{2R_j}\int_{S_{R_j}}|v|^2\,dS
 \longrightarrow0.
\]
Together with \eqref{eq:dissipation-cutoff-limit}, this proves $D=0$.
Hence $\nabla v=0$ almost everywhere; smoothness makes $v$ constant, and
the zero far-field condition gives $v\equiv0$.  This completes the proof of
\Cref{thm:main}.

\begin{remark}
No trace limit on every sphere is needed.  An annular norm bound does not
by itself imply
\[
 \frac1R\int_{S_R}|v|^2\,dS\longrightarrow0
 \quad\text{for every }R\to\infty.
\]
In \eqref{eq:scaled-harmonic-identity} the sphere term is used only
through its nonnegativity.
\end{remark}

\section*{Declaration of generative AI and AI-assisted technologies in
the writing process}
During the preparation of this manuscript, the author used ChatGPT
(OpenAI) to assist with language editing, organization of the exposition,
and preliminary checks of intermediate calculations and mathematical
arguments.  The author critically reviewed and independently verified all
AI-assisted output and takes full responsibility for the results, proofs,
and final text.

{\footnotesize
\bibliographystyle{plain}
\bibliography{references}
}

\end{document}